\documentclass[11pt]{article}

\usepackage[margin=1in]{geometry} 

\IfFileExists{bbm.sty}{\usepackage{bbm}}{}
\usepackage{mathrsfs}
\usepackage{amssymb,amsmath,amsthm,amsfonts}
\usepackage{indentfirst}
\usepackage{color}
\usepackage[hang,flushmargin]{footmisc}
\usepackage{breqn}
\usepackage{graphicx}

\numberwithin{equation}{section}
\allowdisplaybreaks

\def\maslov{{\mu_{\mathrm{Maslov}}}}
\def\cz{{\mu}}

\def\vk{\vec{k}}
\def\lm{{\lambda}}

\def\sg{{\sigma}}
\def\aa{{\alpha}}

\def\om{\omega}
\def\Om{\Omega}

\def\ga{\gamma}

\newcommand{\hmu}{\hat{\mu}}

\newcommand{\R}{\mathbb{R}}
\newcommand{\Z}{\mathbb{Z}}
\newcommand{\Q}{\mathbb{Q}}

\newcommand{\N}{\mathbb{N}}

\newcommand{\U}{\mathbb{U}}

\newcommand{\Sp}{\mathrm{Sp}}
\newcommand{\Gr}{\mathrm{Gr}}
\newcommand{\TSp}{\widetilde{\mathrm{Sp}}}
\newcommand{\TSpn}{\widetilde{\mathrm{Sp}}{}^*}

\newtheorem{theorem}{Theorem}[section]

\newtheorem{proposition}[theorem]{Proposition}
\newtheorem{lemma}[theorem]{Lemma}
\newtheorem{corollary}[theorem]{Corollary}
\newtheorem{conjecture}[theorem]{Conjecture}
\newtheorem{remark}[theorem]{Remark}

\def\CP{{\mathbb C}P}
\def\RP{{\mathbb R}P}
\def\HP{{\mathbb H}P}
\def\CaP{{\text Ca}P}

\def\nn{\nonumber}

\usepackage[colorlinks, linkcolor=black, anchorcolor=black, citecolor=black, urlcolor=black, filecolor=black, menucolor=black, CJKbookmarks=true]{hyperref}

\definecolor{restoredgreen}{rgb}{0,0.42,0.16}

\makeatletter
\let\revisionoriginaltagform\tagform@
\renewcommand{\tagform@}[1]{{\color{black}\revisionoriginaltagform{#1}}}
\let\revisionoriginaleqnnum\@eqnnum
\renewcommand{\@eqnnum}{{\color{black}\revisionoriginaleqnnum}}
\makeatother

\makeatletter
\def\reflb#1#2{\begingroup
	#2%
	\def\@currentlabel{#2}%
	\phantomsection\label{#1}\endgroup
}
\makeatother

\def\HC{{\mathrm{HC}}}

\def\H{{\mathrm{H}}}

\begin{document}
\title{Multiplicity Bounds for Contractible Closed Reeb Orbits on Prequantization Bundles}

\author{
    Peng Liu\thanks{School of Mathematical Sciences, Nankai University, Tianjin, China.
        E-mail address: liupeng@nankai.edu.cn.}
}

\hypersetup{pdftitle={Multiplicity Bounds for Contractible Closed Reeb Orbits on Prequantization Bundles},pdfauthor={Peng Liu}}

\date{}
\maketitle

\begin{abstract}
We study the multiplicity of contractible closed Reeb orbits
on prequantization circle bundles $(M^{2n+1},\xi)$ over
closed symplectic manifolds $(B^{2n},\omega)$.
For even $n$, under suitable geometric hypotheses,
we prove that every non-degenerate contact form supporting
$\xi$ whose contractible closed orbits have nonzero mean
index and whose good contractible closed orbits have
nonzero Conley--Zehnder index carries at least
\[
\frac{1}{2}\chi(B)
+\frac{1}{2}\dim \H_n(B;\mathbb{Q})
+1
\]
geometrically distinct contractible closed Reeb orbits.

In dimension five, under the corresponding geometric
hypotheses and assuming $\H_1(B;\mathbb{Q})=0$,
we prove the conjectural lower bound
$\dim \H_*(B;\mathbb{Q})$
for every non-degenerate supporting contact form
whose simple contractible closed orbits have nonzero
Conley--Zehnder index.
\end{abstract}

\bigskip

\noindent {\bf Keywords.} Reeb flow,\ prequantization bundle,\ equivariant symplectic homology,\ Conley{--}Zehnder index.

\medskip

\noindent\textbf{2020 Mathematics Subject Classification.}
53D35, 53D40, 37J45, 37J55, 58E05.

\section{Introduction}

\subsection{Introduction and Main Results}

We assume throughout that all manifolds are connected.

Let $(M^{2n+1},\xi)$ be a closed, co-oriented contact manifold. A \textit{contact structure} $\xi$ is a maximally non-integrable hyperplane field, and a \textit{contact form} $\alpha$ is a $1$-form such that $\xi = \ker \alpha$. The co-orientability of $\xi$ ensures that $\alpha$ is globally defined up to multiplication by a positive function. The associated \textit{Reeb vector field} $R_\alpha$ is uniquely determined by
\[
\iota_{R_\alpha} d\alpha = 0, \qquad \alpha(R_\alpha) = 1.
\]
The flow of $R_\alpha$ preserves the contact structure, and its integral curves are called \textit{Reeb orbits}. A Reeb orbit $\gamma: \mathbb{R} \to M$ is called \textit{closed} if it is periodic, i.e.\ if there exists $T > 0$ such that $\gamma(t+T) = \gamma(t)$ for all $t$; in this case $T$ is called its period. A closed orbit is \textit{simple} if it is not an iteration of any other closed orbit. Two closed orbits $\gamma$ and $\eta$ are said to be \textit{geometrically distinct} if their images $\gamma(\mathbb{R})$ and $\eta(\mathbb{R})$ are distinct subsets of $M$. 

It is important to distinguish between the following notions.
A contractible closed orbit is called
\textit{simple contractible} if it is not an iterate of
another contractible closed orbit.
Such an orbit may be an iterate of a non-contractible
closed orbit.

The stability type of a closed Reeb orbit $\gamma$ is encoded in its \textit{Floquet multipliers}, i.e.\ the eigenvalues of the restricted linearized Poincaré return map
\[
(d\phi_\alpha^T)_{\gamma(0)}|_{\xi}\colon \xi_{\gamma(0)} \to \xi_{\gamma(0)},
\]
where $\phi_\alpha^t$ denotes the Reeb flow and $T$ is the period of $\gamma$. We denote by $\sigma(\gamma)$ this spectrum and write $\mathbb{U} := \{ z \in \mathbb{C} : |z| = 1 \}$ for the unit circle. We classify $\gamma$ as follows:
\begin{itemize}
	\item \textit{non-degenerate} if $1 \notin \sigma(\gamma)$;
	\item \textit{hyperbolic} if $\sigma(\gamma) \cap \mathbb{U} = \emptyset$;
	\item \textit{elliptic} if $\sigma(\gamma) \subset \mathbb{U}$.
\end{itemize}
The contact form $\alpha$ is called \textit{non-degenerate} if every closed Reeb orbit, including all its iterates, is non-degenerate.

The study of closed orbits in Hamiltonian dynamics is a central topic with a long history. In the context of contact geometry, this becomes the problem of establishing multiplicity results for periodic orbits of Reeb flows on a given contact manifold. A powerful algebraic approach uses invariants such as \emph{(positive) equivariant symplectic homology}~\cite{Vit, BO09mb, BO10, BO13a, BO13b, BO17} and \emph{linearized contact homology}~\cite{EGH}. 

When these homological invariants are unbounded, that is, when the dimension of the corresponding graded vector spaces tends to infinity along some sequence of degrees $k_i \to \infty$, a celebrated result asserts that every contact form on the manifold must possess infinitely many simple closed orbits~\cite{HM, Mcl12}. We refer to this situation as the \emph{homologically unbounded} case; see the survey~\cite{Mac}. Consequently, the most interesting and technically subtle situation is the \emph{homologically bounded} case.

The \emph{prequantization circle bundles} considered below
are homologically bounded. Let $(B^{2n},\omega)$ be a closed integral symplectic manifold. Its prequantization bundle is a contact manifold $(M^{2n+1},\xi)$, where $M$ is the total space of an $S^1$-bundle over $B$ with first Chern class $- [\omega]$ (throughout the paper we adopt the convention $c_1 = -[\omega]$, rather than $[\omega]$). A prototypical example is the standard contact sphere $(S^{2n+1},\xi_{\mathrm{std}})$, which is the prequantization of $(\CP^n,\omega_{\mathrm{FS}})$.

When the symplectic base $(B,\omega)$ is aspherical (i.e.\ $\omega|_{\pi_2(B)} = 0$), it is known that under mild assumptions, every contact form on $M$ has infinitely many simple closed orbits~\cite{GGM15,GGM18}; see also~\cite{GS}. The situation is more delicate when $\omega$ is not aspherical, as there exist contact forms with only finitely many closed orbits. For instance, the standard contact sphere $S^{2n+1}$ is a prequantization of $\CP^n$ and admits contact forms with precisely $n+1$ simple closed orbits, e.g.\ irrational ellipsoids. This naturally raises the problem of finding an optimal lower bound for the number of simple closed Reeb orbits on such manifolds.

Regarding the multiplicity problem for closed Reeb orbits on prequantization $S^1$-bundles, recent research has led to the following conjecture, cf.~\cite{AbM24}:

\begin{conjecture}\label{conj:multiplicity}
	Let $(M^{2n+1},\xi)$, with $n\ge 1$, be a prequantization $S^1$-bundle over a closed symplectic manifold $(B,\omega)$, and let $\alpha$ be a contact form on $M$ that supports $\xi$. Then $\alpha$ admits at least $r_B$ simple periodic orbits, where
	\[
	r_B := \dim \H_*(B;\mathbb{Q}) \, .
	\]
\end{conjecture}

\begin{remark}
	In the three-dimensional case (i.e.\ $n=1$), Conjecture~\ref{conj:multiplicity} is known to hold. In fact, Cristofaro-Gardiner and Hutchings~\cite{CGH16} proved a sharp result: every contact form on a closed three-manifold has at least two embedded Reeb orbits. Consider a prequantization $S^1$-bundle over a closed surface $B$. When $B \cong S^2$, we have $r_B = \dim \H_*(B;\mathbb{Q}) = 2$, and this lower bound is sharp. On the other hand, if $B$ has genus $g \ge 1$ (so in particular $B \cong T^2$ when $g=1$), then by~\cite[Theorem~6.4]{GGM18} (one easily checks that in this case conditions (i) and (ii) of that theorem are automatically satisfied), it follows that every supporting contact form $\alpha$ must have infinitely many simple closed Reeb orbits.
\end{remark}

Conjecture~\ref{conj:multiplicity} remains open in higher dimensions without additional assumptions. In particular, for $n \ge 2$, without imposing any
non-degeneracy or index or action conditions, it is not even known
whether there must exist more than one simple closed Reeb orbit.
For the standard contact sphere $S^{2n+1}$, \c{C}ineli, Ginzburg, and G\"urel~\cite{CGG24} proved that every dynamically convex contact form has at least $n+1$ simple closed Reeb orbits.

Ginzburg, G\"urel, and Macarini~\cite{GGM18} proved that,
under suitable topological and filling assumptions, every
non-degenerate supporting contact form on a prequantization
circle bundle carries at least two geometrically distinct
contractible closed Reeb orbits.
They also established multiplicity results under additional
index conditions, including the requirement that the mean
Conley--Zehnder indices of all contractible closed Reeb orbits
are either all positive or all negative.
Using contact homology, they also obtained multiplicity results
for prequantization bundles over negative monotone symplectic
manifolds $B$, with the additional requirement $c_B\ge n$
\cite[Theorem~6.2]{GGM18}.

Duan, Liu, Long, and Wang \cite{DLLW24}
extended these multiplicity results to the case of
nonzero mean indices of either sign under a
symplectically aspherical filling hypothesis.
The condition $\Omega|_{\pi_2(W)}=0$ on the filling
$(W,\Omega)$ can be dropped using Novikov coefficients
\cite{AbM24}. In the non-filling setting, Ginzburg, G\"urel, and Macarini \cite{GGM18} obtained multiplicity results
using linearized contact homology.
Combining these results with arguments from their proofs, we obtain
the following theorem.

We first recall the relevant notation.
Let $(M^{2n+1},\xi)$ be a closed contact manifold satisfying 
\[
c_1(\xi)|_{\pi_2(M)} = 0 ,
\]
and let $\alpha$ be a contact form supporting $\xi$. Following~\cite{GGM18}, we say that $\alpha$ is \emph{index-positive} (respectively, \emph{index-negative}) if the mean index $\hat{\mu}(\gamma)$ is positive (respectively, negative) for every contractible periodic orbit $\gamma$ of $\alpha$; if either condition holds, $\alpha$ is said to be \emph{index-definite}. A non-degenerate contact form $\alpha$ is called \emph{index-admissible} if it has no contractible periodic orbits with Conley--Zehnder index equal to $2-n$ or $2-n \pm 1$. For convenience, we furthermore say that a contact form $\alpha$ is \emph{index-nonzero} if $\hat{\mu}(\gamma) \neq 0$ for every contractible periodic orbit $\gamma$ of $\alpha$. Below, as is customary, a non-degenerate periodic orbit $\gamma$ is said to be \emph{good} if its Conley--Zehnder index $\mu(\gamma)$ has the same parity as that of the underlying simple orbit; see Section~\ref{sec:index_orbit}. In what follows, given a symplectic manifold $B$, we denote by $\chi(B)$ its Euler characteristic and by $c_B$ its minimal Chern number, defined by
$$
c_B := 
\begin{cases}
	\displaystyle \inf \big\{ k \in \mathbb{N} \mid \exists S \in \pi_2(B) \text{ with } \langle c_1(TB), S \rangle = k \big\}, & \text{if } c_1(TB)\big|_{\pi_2(B)} \neq 0, \\[0.4em]
	0, & \text{if } c_1(TB)\big|_{\pi_2(B)} = 0.
\end{cases}
$$
Suppose that $c_B\ne0$. We say that $(B,\omega)$ is
\emph{spherically monotone} if there exists $\lambda\in\mathbb R\setminus \{0\}$
such that
\[
c_1(TB)|_{\pi_2(B)}=\lambda[\omega]|_{\pi_2(B)}.
\]
We call $(B,\omega)$ \emph{spherically positive monotone} if
$\lambda>0$ and \emph{spherically negative monotone} if $\lambda<0$.


\begin{theorem}[after \cite{GGM18,DLLW24,AbM24}]\label{prop:previous-prequantization}
Let $(M^{2n+1},\xi)$ be a prequantization $S^1$-bundle of a closed symplectic manifold $(B,\omega)$ such that $\omega|_{\pi_2(B)}\neq 0$, $c_B>n/2$, and either $\H_{k}(B;\mathbb{Q})=0$ for every odd $k$ or $c_B>n$. Let $\alpha$ be a non-degenerate contact form supporting $\xi$ that is index-nonzero and has no contractible good periodic Reeb orbits $\gamma$ with Conley--Zehnder index $\mu(\gamma)=0$ when $n$ is odd, or $\mu(\gamma) \in \{0,\pm 1\}$ when $n$ is even. Assume that one of the following conditions holds:
\begin{itemize}
\item[(F)] $M$ admits a strong symplectic filling $(W,\Omega)$ such that $c_1(TW)|_{\pi_2(W)}=0$, and the inclusion-induced map $\pi_1(M) \to \pi_1(W)$ is injective.
\item[(NF)] $c_1(\xi)|_{\H_2(M;\Q)}=0$, and $c_B\ge n$ if $B$ is spherically negative monotone.
\end{itemize}
Then $\alpha$ carries at least $\widetilde r_B$
geometrically distinct contractible closed Reeb orbits, where
\[
\widetilde r_B=
\begin{cases}
\chi(B)+2\dim \H_n(B;\mathbb Q), & n\text{ is odd},\\[2mm]
\chi(B)+4\dim \H_{n-1}(B;\mathbb Q), & n\text{ is even}.
\end{cases}
\]
\end{theorem}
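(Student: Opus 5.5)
The plan is to compare an invariant of $(M,\xi)$ in the contractible class, computed from the Boothby--Wang form, with the same invariant computed from $\alpha$. The invariant is positive $S^1$-equivariant symplectic homology $SH^{+,S^1}_*$ under (F), using Novikov coefficients as in \cite{AbM24} so that no asphericity of $\Omega$ is needed. Under (NF) it is linearized/cylindrical contact homology $\HC_*$ as in \cite{GGM18}. The hypotheses $c_B>n/2$ and $c_1(\xi)|_{\H_2(M;\Q)}=0$ (resp.\ $c_1(TW)|_{\pi_2(W)}=0$) give a well-defined $\Z$-grading on contractible orbits. In case (NF) with $B$ spherically negative monotone, $c_B\ge n$ is what makes contact homology well defined via the index bounds of \cite[Theorem~6.2]{GGM18}.

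First I compute the invariant from the prequantization form, which is Morse--Bott: each fiber multiple $k c_B$ (the contractible fibers) contributes a copy of $\H_*(B;\Q)$, shifted by $2kc_B+\text{const}$. Since $c_B>n/2$, consecutive copies overlap only in the middle degrees of $B$. Together with either vanishing odd homology (no differentials possible, by parity) or $c_B>n$ (no overlap at all), the Morse--Bott spectral sequence degenerates. In the symmetric degree window around $0$ (degrees $\{0\}$ for odd $n$, $\{0,\pm1\}$ for even $n$), the resulting graded rank is bounded, and away from that window it equals a periodic pattern whose total over one period of length $2c_B$ is $\chi(B)$-type sums of Betti numbers. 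The exact contribution of the middle degrees, where two copies overlap, yields the extra terms $2\dim\H_n$ (odd $n$) or $4\dim\H_{n-1}$ (even $n$).

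Second, on the $\alpha$ side, the chain complex is generated by good contractible orbits graded by $\mu$. I use the standard resonance/mean-index argument of \cite{GGM18,DLLW24}. For each geometrically distinct orbit $\gamma$ with $\hmu(\gamma)\ne0$ (index-nonzero), the iterates $\gamma^k$ satisfy $|\mu(\gamma^k)-k\hmu(\gamma)|\le n$, so their indices tend to $\pm\infty$ linearly. Following \cite{DLLW24}, orbits with positive and negative mean index are treated separately and contribute to positive and negative degrees respectively. A counting argument over a large window of degrees $[-N,N]$ then gives
$\sum_{|d|\le N}\operatorname{rank} \ge$ (homology count), while each orbit $\gamma$ contributes about $2N/|\hmu(\gamma)|$ generators. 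Combined with the resonance relation (the sum of $1/\hmu$ over orbits with signs equals the asymptotic density of homology, itself expressed via $\chi(B)/c_B$), this yields a lower bound on the number of orbits. The excluded indices $0$ (resp.\ $0,\pm1$) remove the possibility that a single orbit absorbs the degree-$0$ (middle) classes with small multiplicity. This forces the middle contribution to be realized by distinct orbits, giving the extra $2\dim\H_n$ or $4\dim\H_{n-1}$ on top of $\chi(B)$. That $n$ even uses $\{0,\pm1\}$ reflects that the middle homology there sits in degrees $\pm1$ relative to the window.

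The main obstacle is the middle-degree bookkeeping: proving that the homology classes supported near degree $0$ cannot be carried by iterates of the orbits already accounted for in $\chi(B)$. My first attempt will be to use the index inequality for iterates of one orbit, $\mu(\gamma^{k})$ lying in an interval of length $2n$ around $k\hmu$, together with the sign of $\hmu$ and the exclusion of $\mu\in\{0,\pm1\}$. The goal is to show that each distinct orbit meets the middle window in at most one generator per sign. This needs the equality case analysis from \cite{GGM18}, namely that when $\mu(\gamma^k)$ attains the extremes the orbit is totally degenerate-like and hence bad or excluded.
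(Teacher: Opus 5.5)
Your homological setup, Novikov coefficients under (F) and linearized contact homology under (NF), is the right one. The paper does not reprove this statement. It assembles it from \cite{GGM18,DLLW24,AbM24}, whose counting mechanism is the one written out in Section~\ref{sec:proof-main}. The gap is in your dynamical counting step, which would fail as written.

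\textbf{The density count cannot bound $r$.} Counting generators in a large window $[-N,N]$ and invoking the resonance relation \eqref{eq:resonance} only controls asymptotic densities such as $\sum_i\hat\chi(\gamma_i)/|\hmu(\gamma_i)|$. These are compatible with a single simple orbit of suitably small mean index. Index-nonzero gives no lower bound on $|\hmu(\gamma_i)|$, so no lower bound on $r$ follows. Your proposed repair, that each orbit meets the relevant window in at most one generator per sign, is false. By \eqref{eq:range}, all iterates with $k\hmu(\gamma)\in[a-1,a+1]$ have indices in $(a-n-1,a+n+1)$, and there are roughly $2/|\hmu(\gamma)|$ of them. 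There is also no equality case to analyze, since \eqref{eq:range} is strict for non-degenerate paths.

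\textbf{The missing ingredient is Theorem~\ref{indexrecurrence}, which localizes everything at one large degree.} One takes $d=2sc_B$ and even $k_i$ such that $\cz(\gamma_i^{k_i\pm\ell})=\rho_i d\pm\cz(\gamma_i^\ell)$ for $1\le\ell\le\ell_0$ and $|\cz(\gamma_i^{k_i})-\rho_i d|\le n$. All other iterates then have index at distance at least $n+1$ from $\rho_i d$. The resonance relation enters only afterwards, through integrality and a small choice of $\eta$, to give $\sum_i\sum_{k=1}^{k_i}\chi(\gamma_i^k)=s\chi(B)$ as in Lemma~\ref{lemma:resonance}.

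\textbf{Where the excluded indices act.} They are used at the shifted degrees $\rho_i d$ (and $\rho_i d\pm1$ for even $n$), not near degree $0$. Since $k_i$ is even, $\gamma_i^{k_i-\ell}$, $\gamma_i^{k_i+\ell}$ and $\gamma_i^\ell$ are simultaneously good with the same index parity. Excluding $\cz(\gamma_i^\ell)=0$ (resp.\ $\cz(\gamma_i^\ell)\in\{0,\pm1\}$) forces each pair $\gamma_i^{k_i\pm\ell}$ to straddle the window edges at $\rho_i d\pm\tfrac12$ (resp.\ also $\rho_i d\pm\tfrac32$). So each window sees exactly one member of each pair, just as $\sum_{k\le k_i}\chi(\gamma_i^k)$ does. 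The Morse inequalities \eqref{eq:Morse_inequalities2} on $[-d+1,d-1]$ and its neighbouring windows then leave a deficit carried only by the $r$ iterates $\gamma_i^{k_i}$, one per simple orbit, as in Lemma~\ref{lemma:estimation}.

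\textbf{Where the correction terms come from.} They come from the Betti numbers at these window edges together with the evaluation \eqref{bettinumber}. For $d=2sc_B$ one has $b_d=\dim\H_n(B;\Q)$, and for even $n$ also $b_{d\pm1}=\dim\H_{n\pm1}(B;\Q)$. They do not come from ``middle-degree overlaps'': for $c_B>n/2$, consecutive Morse--Bott copies overlap only between degrees above $n$ of one copy and degrees below $n$ of the next. Nor do they come from a window around degree $0$, where $\HC^0_*(M)$ vanishes in the positive monotone case. Relatedly, $c_B>n/2$ plays no role in the grading. It makes $\HC^0_*(M)$ vanish in degrees $\le0$ and keeps the edge Betti numbers equal to single Betti numbers of $B$. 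It is also forced by the index-zero exclusion (Remark~\ref{rm:chern-zero-index}).
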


\begin{remark}\label{rem:comparison-previous-am}
More recently, Abreu and Macarini~\cite{AbM26} obtained
multiplicity results 
for contact manifolds with periodic positive
$S^1$-equivariant symplectic homology. Their Theorem~1 assumes
that this homology is supported in degrees of a single parity.
To compare their result with
Theorem~\ref{prop:previous-prequantization}, suppose that $n$ is even,
that $\H_k(B;\mathbb Q)=0$ for every odd $k$, and that
$c_1(TB)=\lambda[\omega]$ in $\H^2(B;\mathbb R)$ for some $\lambda>0$.
Let $\alpha$ be a non-degenerate supporting contact form whose
contractible closed orbits all have nonzero mean index.
Under the other hypotheses of the respective results,
Theorem~\ref{prop:previous-prequantization} gives at least
$r_B=\dim\H_*(B;\mathbb Q)$ geometrically distinct contractible
closed Reeb orbits if no good contractible orbit has index $-1$,
$0$, or $1$. Abreu and Macarini obtain the same bound if, for
some positive odd integer $p$, no good contractible orbit has
index $p$ or $-p$. In particular, taking $p=1$ gives $r_B$
without a separate exclusion of index zero
(see also the comparison in \cite[Remark~8.2]{AbM26}).
Equality in their bound holds precisely when every contractible
closed Reeb orbit has even index
\cite[Theorems~1 and~2\textup{(PM)} and Remark~2.11]{AbM26}.

Abreu and Macarini assume in the statement of case \textup{(PM)}
that the odd-degree rational homology of $B$ vanishes. When $c_B>n$,
both their computation for a positive monotone base and
Theorem~\ref{prop:previous-prequantization} allow nonzero
odd-degree rational homology. For even $n$ and $p=1$, their
computation gives $\chi(B)+2\dim\H_{n-1}(B;\mathbb Q)$ for the
contractible class, using Poincar\'e duality
(see \cite[Remark~2.22 and Section~8, formula~(8.2)]{AbM26}).
With the additional exclusion of index zero for good contractible
orbits, Theorem~\ref{prop:previous-prequantization} gives
$\chi(B)+4\dim\H_{n-1}(B;\mathbb Q)$.

Case \textup{(PM)} of \cite[Theorem~2]{AbM26} assumes that
the base is positive monotone, whereas
Theorem~\ref{prop:previous-prequantization} also allows
spherically negative monotone bases.. For positive monotone bases, their result
does not impose $c_B>n/2$ as a general hypothesis. In the
non-filling case they require $c_B>1$, which already follows
from $c_B>n/2$ for the even $n\ge2$ considered here.
Their non-filling result also requires every contractible closed
orbit $\gamma$ of the given form $\alpha$ to satisfy $\mu(\gamma)>3-n$.
\end{remark}

Examples satisfying the geometric hypotheses of
Theorem~\ref{prop:previous-prequantization} include the standard
contact sphere $S^{2n+1}$ and the unit cosphere bundles $S^*N$ of
compact rank one symmetric spaces \textup{(CROSS)} with $\dim N>2$.
(For $N=S^2$ or $\RP^2$, the filling hypothesis fails, although these cases 
are known to have at least two simple closed Reeb orbits, see \cite{GGM18}, \cite{AbM24} for details.)
In all these cases, the base manifold $B$ has vanishing odd-degree rational homology (see \cite[p.~141]{Zil}), and its minimal Chern number $c_B$ and total rank $r_B$ of $\H_*(B;\Q)$ are given in Table~\ref{tab:prequantizations}, as shown in \cite{GGM18}, \cite{AbM24}.

\begin{table}[!htb]
	\centering
	\begin{tabular}{ | c | c | c | }
		\hline
		Prequantization & $r_B = \dim \H_*(B;\Q)$ & $c_B$ \\
		\hline
		\rule{0pt}{0.4cm}
		$S^{2n+1}$ & $n+1$ & $n+1$ \\
		$S^*S^{2}$ or $S^*\RP^2$ & $2$ & $2$ \\
		$S^*S^{m}$ or $S^*\RP^m$, $m>2$ even & $m$ & $m-1$ \\
		$S^*S^{m}$ or $S^*\RP^m$, $m\geq3$ odd & $m+1$ & $m-1$ \\
		$S^*\CP^{m}$, $m\geq2$ & $m(m+1)$ & $m$ \\
		$S^*\HP^m$ & $2m(m+1)$ & $2m+1$ \\
		$S^*\CaP^2$ & $24$ & $11$ \\
		\hline
	\end{tabular}
	\vskip .2cm
	\caption[Prequantizations, total Betti numbers, and minimal Chern numbers]{Some prequantizations and the corresponding $r_B$ and $c_B$.}
	\label{tab:prequantizations}
\end{table}

\begin{corollary}
	Let $(M,\xi)$ be either the standard contact sphere $S^{2n+1}$ or the unit cosphere bundle $S^*N$ of a CROSS, and let $\alpha$ be a contact form supporting $\xi$. If $\alpha$ satisfies the hypotheses of
	Theorem~\ref{prop:previous-prequantization},
	then it has at least $r_B$ geometrically distinct
	periodic orbits.
\end{corollary}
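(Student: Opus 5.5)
The corollary follows from Theorem~\ref{prop:previous-prequantization} by checking its geometric hypotheses for each case in Table~\ref{tab:prequantizations} and then comparing $\widetilde r_B$ with $r_B$. For each case, $B$ is one of $\CP^n$ (for $S^{2n+1}$) or the quadric/twistor-type base of the unit cosphere bundle of a CROSS, with the prequantization structure recalled in \cite{GGM18,AbM24}. In all cases $\omega|_{\pi_2(B)}\neq0$, since $B$ is simply connected and the symplectic class is integral and nontrivial. Moreover, odd-degree rational homology vanishes \cite[p.~141]{Zil}, so the alternative ``$\H_k(B;\Q)=0$ for odd $k$'' holds automatically and we never need $c_B>n$.

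It then remains to check $c_B>n/2$ and one of (F) or (NF). The dimension bookkeeping goes as follows. For $S^{2n+1}$, $c_B=n+1>n/2$. For $S^*N$ with $\dim N=m$, we have $\dim B=2(m-1)$, so $n=m-1$. The table gives $c_B=m-1$ for spheres and real projective spaces, $c_B=m=n/2$ for $\CP^m$ once we note that $\dim_\R N=2m$ gives $n=2m-1$, $c_B=2m+1$ for $\HP^m$ with $n=4m-1$, and $c_B=11$ for $\CaP^2$ with $n=15$. In each case one verifies $c_B>n/2$ directly; for instance, for $\CP^m$ we get $m>(2m-1)/2$. For the filling hypothesis (F), we use the standard fillings: the ball $B^{2n+2}$ for the sphere, and the unit codisk bundle $D^*N$ for $S^*N$. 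Both have $c_1(TW)=0$; for $D^*N$ this holds because $TW$ is the complexification of $TN$. The map $\pi_1(M)\to\pi_1(W)$ is injective when $\dim N>2$, since $S^*N\to N$ has fiber $S^{m-1}$ with $m-1\ge2$, giving $\pi_1(S^*N)\cong\pi_1(N)\cong\pi_1(D^*N)$. This explains the exclusion of $N=S^2,\RP^2$. In those low-dimensional cases, if one wants them covered, we would instead check (NF), which needs $c_1(\xi)|_{\H_2(M;\Q)}=0$ and monotonicity considerations.

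The count is then immediate. Since odd homology vanishes, $\dim\H_{n-1}(B;\Q)=0$ for even $n$. For odd $n$, $\H_n(B;\Q)=0$ as well. Hence $\widetilde r_B=\chi(B)=\sum_k\dim\H_{2k}(B;\Q)=r_B$, and Theorem~\ref{prop:previous-prequantization} yields at least $r_B$ geometrically distinct contractible, hence distinct, periodic orbits.

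The main obstacle is bookkeeping rather than new ideas: correctly identifying $n$ versus $\dim N$ for each CROSS so that $c_B>n/2$ is verified, and verifying the $\pi_1$-injectivity and $c_1(TW)|_{\pi_2(W)}=0$ for the codisk fillings, including the non-simply-connected case $\RP^m$. For the latter, I would first rely on the Lagrangian zero section, which gives $TW|_N\cong TN\otimes\mathbb C$, so that $c_1$ is $2$-torsion and vanishes rationally on $\pi_2$. That is enough, since $c_1$ is integral-valued on spheres while a torsion class pairs trivially with them.
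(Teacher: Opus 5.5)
Your proposal is correct and matches the argument the paper gives in the text around Table~\ref{tab:prequantizations}: the geometric hypotheses hold for these examples, and Ziller's computation gives vanishing odd-degree rational homology of $B$. Hence $\widetilde r_B=\chi(B)=r_B$, and Theorem~\ref{prop:previous-prequantization} gives the bound. There are two harmless slips: for $S^*\CP^m$ you mean $c_B=m>n/2=(2m-1)/2$ (not $c_B=n/2$), and for $N=\RP^{2k}$ the class $c_1(T(D^*N))$ is a nonzero $2$-torsion class rather than zero, which your later torsion argument already handles because (F) only needs $c_1(TW)|_{\pi_2(W)}=0$.
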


We return to Theorem~\ref{prop:previous-prequantization}.
For even $n$, what lower bound remains if good contractible orbits
of indices $\pm1$ are allowed? We retain the geometric and homological hypotheses,
the assumption that all contractible closed orbits have nonzero
mean index, and the exclusion of good contractible orbits of index
zero.

\begin{theorem}\label{thm:main}
Let $(M^{2n+1},\xi)$, where $n$ is even, be a prequantization
$S^1$-bundle of a closed symplectic manifold $(B,\omega)$
such that $\omega|_{\pi_2(B)}\neq 0$ and $c_B>n/2$.
Assume that either $\H_k(B;\mathbb Q)=0$ for every odd $k$,
or $c_B>n$. Let $\alpha$ be a non-degenerate contact form supporting~$\xi$ that is index-nonzero and has no contractible good periodic Reeb orbits $\gamma$ with Conley--Zehnder index $\mu(\gamma)=0$. Assume that one of the following conditions holds:
	\begin{itemize}
		\item[\reflb{cond:Fe}{(F)}] $M$ admits a strong symplectic filling $(W,\Omega)$ such that  $c_1(TW)|_{\pi_2(W)}=0$, and the inclusion-induced map $\pi_1(M) \to \pi_1(W)$ is injective. 
		\item[\reflb{cond:NFe}{(NF)}] $c_1(\xi)|_{\H_2(M;\Q)}=0$, and $ c_B\ge n $ if $B$ is spherically negative monotone.
	\end{itemize}
	Then $\alpha$ carries at least $\bar{r}_B$ geometrically distinct contractible periodic orbits, where
	\[
	\bar{r}_B:=\frac{1}{2}\chi(B) + \frac{1}{2}\dim \H_{n}(B;\Q)+1.
	\]
\end{theorem}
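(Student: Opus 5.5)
The plan is to follow the scheme behind Theorem~\ref{prop:previous-prequantization} from \cite{GGM18,DLLW24,AbM24}, but to handle the two signs of the mean index separately. Doing so avoids the overlap created by good orbits of index $\pm1$, which are now allowed.

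\textbf{Step 1: the invariant.} Work in the free homotopy class of contractible loops. In case \ref{cond:Fe}, use positive $S^1$-equivariant symplectic homology of the filling, with Novikov coefficients as in \cite{AbM24}. In case \ref{cond:NFe}, use linearized/cylindrical contact homology as in \cite{GGM18}. Call the resulting graded group $\HC_*(M)$ in either case. Computing it with the Morse--Bott form given by the prequantization connection, each fibre-iterate $N$ contributes a copy of $\H_{*}(B;\Q)$ shifted by $2Nc_B-n$ (up to the normalisation of the sign convention $c_1=-[\omega]$). Because $c_B>n/2$, and because either the odd homology vanishes or $c_B>n$, the spectral sequence degenerates and the shifted copies do not cancel. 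So $\HC_*(M)$ is $2c_B$-periodic for large $|*|$, supported in $[-n,n]$ near degree zero, and has $\dim\HC_k=b_{k+n}(B)$ in the relevant window. In particular, with Poincar\'e duality, the degrees $k\ge0$ of one period carry total rank $\sum_{i\ge n}b_i$, and the degrees $k\le0$ carry the same total.

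\textbf{Step 2: splitting by the sign of the mean index.} Since $\alpha$ is index-nonzero, the contractible orbits split into $\mathcal P_+$ (positive mean index) and $\mathcal P_-$ (negative mean index), and iteration preserves this splitting. Large positive degrees are carried only by iterates of orbits in $\mathcal P_+$, and large negative degrees only by $\mathcal P_-$. I will run the common index jump / index recurrence argument (as in \cite{DLLW24}) separately on each family. For $\mathcal P_+$, choose a common jump iterate $N$ so that each orbit $\gamma\in\mathcal P_+$ has iterates whose indices lie just below and just above $2N\hat\mu$-type levels. This yields an injection from the nonzero degrees of one positive period window to simple orbits in $\mathcal P_+$, each degree used at most once, after accounting for ranks through local homology. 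For bad iterates, local $\HC$ vanishes, and the parity of goodness is handled exactly as in \cite{GGM18}. The resulting count is that $\#\mathcal P_+$ is at least the alternating Euler-type contribution of degrees in $[1,n]$ plus the contribution of the degrees that only $\mathcal P_+$ can hit. Run the symmetric argument for $\mathcal P_-$ on degrees in $[-n,-1]$.

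\textbf{Step 3: the count.} The excluded index $0$ removes the middle degree from the local-homology bookkeeping. Degree $n$ of $B$ corresponds to $\HC_0$, and by Step 1 its rank $b_n$ is split evenly between the two sides in the Euler characteristic bookkeeping. This is where the $\tfrac12 b_n$ comes from. Summing over both sides gives $\tfrac12\chi(B)+\tfrac12 b_n$ from the alternating sums $\sum_{i<n}(-1)^ib_i+b_n$. The additional $+1$ should come from showing that $\mathcal P_+$ and $\mathcal P_-$ are both nonempty, so neither side's count can be absorbed into the other. An index-definite form would be covered by the stronger bound already in Theorem~\ref{prop:previous-prequantization}. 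The nonemptiness would be proved using the resonance relation $\sum_{\gamma}\chi(\gamma)/\hat\mu(\gamma)=$ the (nonzero) mean Euler characteristic of $\HC_*(M)$, split by sign.

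\textbf{Expected main obstacle.} The hardest step is the injection in Step 2 near degrees $\pm1$. A single good orbit of index $\pm1$ can a priori serve both the lowest positive degree and, through local homology spread, the $\HC_0$ contribution. This is exactly where the previous proof used the exclusion of $\pm1$, and why one can only guarantee half of $b_n$. I would first try to bound the number of degrees in $\{-1,0,1\}$ that one orbit can carry by a Morse-type inequality on the local equivariant homology, which is one-dimensional for non-degenerate good orbits. I would combine this with the parity constraint coming from $n$ even.
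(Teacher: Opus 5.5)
\textbf{There are genuine gaps, starting with Step~1.} Step~1 describes the invariant incorrectly, and Steps~2--3 depend on that description. In the spherically positive monotone case, \eqref{eq:CH} gives $\HC^0_k(M)\cong\bigoplus_{m\ge1}\H_{k-2mc_B+n}(B;\Q)$. Its lowest nonzero degree is $2c_B-n\ge1$ because $c_B>n/2$, so $\HC^0_k(M)=0$ for every $k\le0$. In the negative monotone case the homology lives in degrees $\le n-2c_B<0$. There is no copy of $\H_*(B;\Q)$ centred at degree $0$. Moreover $\HC^0_0(M)=0$, as it must be when no good orbit has index $0$ (Remark~\ref{rm:chern-zero-index}). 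So the $\tfrac12\dim\H_n(B;\Q)$ cannot come from ``$b_n$ in $\HC_0$ split between the two sides''. The ``symmetric argument for $\mathcal P_-$ on degrees $[-n,-1]$'' also has nothing to count, since $\chi_-(M)=0$.

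Your $+1$ mechanism fails as well. $\mathcal P_-$ can be empty: irrational ellipsoids satisfy all the hypotheses and are index-positive. The resonance relation cannot force $\mathcal P_-\neq\emptyset$, again because $\chi_-(M)=0$. And an index-definite form is \emph{not} covered by Theorem~\ref{prop:previous-prequantization}, which excludes good orbits of index $\pm1$ --- exactly the orbits this theorem is meant to allow. Finally, the degree-by-degree injection in Step~2 is precisely the argument that breaks at indices $\pm1$. You name this obstacle but do not resolve it.

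\textbf{The missing idea is to drop any injection and compare alternating sums on a single window.} Assume there are finitely many simple contractible orbits. Take one index-recurrence level $d=2sc_B$ with all $k_i$ even (Theorem~\ref{indexrecurrence}), treating both signs of $\hmu$ at once. The resonance relation gives $\sum_i\sum_{k\le k_i}\chi(\gamma_i^k)=s\chi(B)$ (Lemma~\ref{lemma:resonance}).

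For a good $\gamma_i^\ell$ with $\ell\le\ell_0$, the iterates $\gamma_i^{k_i\mp\ell}$ have indices $\rho_i d\mp\mu(\gamma_i^\ell)$ of the same parity. Since $\mu(\gamma_i^\ell)\neq0$, exactly one of them lies in $[-d+1,d-1]$. Hence they cancel when the window sum is compared with the resonance sum. This is the only place the index-zero hypothesis enters, and indices $\pm1$ cause no trouble.

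What remains is $\sum_{m=-d+1}^{d-1}(-1)^mc_m=s\chi(B)-(r_+^e-r_+^o)$. Here $r_+^e$ and $r_+^o$ count the good iterates $\gamma_i^{k_i}$ with $|\mu|\ge d$, sorted by parity of the index. The Betti side equals $s\chi(B)-\sum_{j=n}^{2n}(-1)^jb_j(B)$, because the top half of the $s$-th copy of $\H_*(B;\Q)$ is cut off at degree $d-1$. The alternating Morse inequality and Poincar\'e duality then give $r\ge r_+^e\ge r_+^e-r_+^o\ge\tfrac12\chi(B)+\tfrac12b_n(B)$.

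The $+1$ comes from analysing the equality case:
\begin{itemize}
\item Equality forces every $\gamma_i^{k_i}$ to be good with even index, so every good contractible orbit has even index.
\item Then odd-degree $\HC^0_*(M)$ vanishes, which forces $\H_{\mathrm{odd}}(B;\Q)=0$ and excludes indices $\pm1$.
\item Now Theorem~\ref{prop:previous-prequantization} does apply and gives $r\ge\chi(B)=\tfrac12\chi(B)+\tfrac12b_n(B)+\sum_{j<n}b_j(B)$.
\item This contradicts equality, since $b_0(B)=1$.
\end{itemize}
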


\begin{remark}\label{rem:comparison-abreu-macarini}
When the odd-degree rational homology of $B$ vanishes,
Theorem~\ref{prop:previous-prequantization} gives the conjectural
bound $r_B$. Allowing good contractible orbits of indices $\pm1$
gives the bound $\bar r_B$ in Theorem~\ref{thm:main}.
More generally, Poincar\'e duality gives
\[
	r_B-\bar r_B
	=
	\sum_{\substack{0<j<n\\j\ \mathrm{even}}}
	\dim \H_j(B;\mathbb{Q})
	+
	3\sum_{\substack{0<j<n\\j\ \mathrm{odd}}}
	\dim \H_j(B;\mathbb{Q}).
	\]
Thus, for $n=2$, the conjectural bound is recovered precisely
when $\H_1(B;\mathbb Q)=0$. For even $n\ge4$, the inequality
$\bar r_B<r_B$ is strict because $\H_2(B;\mathbb Q)\ne0$.

\cite[Theorem~5]{AbM26} excludes good orbits of index zero
when $p=0$. Unlike 
\cite[Theorem~1]{AbM26}, it does not require the positive
$S^1$-equivariant symplectic homology to be concentrated
in degrees of a single parity.
Suppose that $B$ is positive monotone and that the hypotheses
of both theorems under comparison hold.
Let $N$ denote the number of geometrically distinct contractible
closed Reeb orbits. By \cite[Remark~2.11]{AbM26}, their theorem
applies to the contractible class. Since $n$ is even and
$2c_B>n$, the computation in
\cite[Section~8, Lemma~3]{AbM26} reduces the lower bound
stated in their Theorem~5 to
$N\ge \dim\H_n(B;\mathbb Q)-\chi(B)$.
The equality statement in \cite[Theorem~5]{AbM26} would force
every contractible closed orbit to have odd index.
However, \eqref{eq:CH} gives $\HC^0_{2c_B-n}(M)\ne0$,
so there is a good contractible closed orbit of even index.
The inequality is therefore strict.

If the odd-degree rational homology of $B$ vanishes, then
$\chi(B)\ge \dim\H_n(B;\mathbb Q)+2$. Thus even the strict
inequality gives only $N\ge \dim\H_n(B;\mathbb Q)-\chi(B)+1$,
whose right-hand side is at most $-1$. In this case the estimate
at $p=0$ does not recover the positive lower bound in
Theorem~\ref{thm:main}. When $c_B>n$, Theorem~\ref{thm:main}
also allows nonzero odd-degree rational homology.

\cite[Theorem~5]{AbM26} requires the homology to vanish
in all sufficiently negative degrees. For a negative monotone base,
however, \eqref{eq:CH-nm} gives $\HC^0_{-2mc_B-n}(M)\ne0$ for
every $m\ge1$, so this hypothesis fails.
\end{remark}

Now consider either the standard contact sphere $(S^{2m-1},\xi)$, which is the prequantization $S^1$-bundle over $(\CP^{m-1},\omega)$, or the unit cosphere bundle $(S^*N,\xi)$ with $N=S^{m}$ or $\RP^{m}$, which  is the prequantization $S^1$-bundle over the Grassmannian of oriented $2$-planes $(\Gr_2^+(\R^{m+1}),\omega)$. In both cases we apply Theorem~\ref{thm:main} with $n=m-1$. Using Ziller's computation of the homology groups of the base in \cite[p.~141]{Zil}, we obtain the following corollaries.

\begin{corollary}\label{cor:main1}
	Let $(M,\xi)$ be the standard contact sphere $S^{2m-1}$, where $m$ is odd and $m\geq 3$. Let $\alpha$ be a non-degenerate contact form supporting~$\xi$. If $\alpha$ is index-nonzero and has no good periodic Reeb orbits $\gamma$ with Conley--Zehnder index $\mu(\gamma)=0$, then it has at least $\frac{m+3}{2}$ geometrically distinct periodic orbits.
\end{corollary}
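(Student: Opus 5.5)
This corollary follows by applying Theorem~\ref{thm:main} with $B=\CP^{m-1}$ and $n=m-1$, then evaluating $\bar r_B$. Since $m$ is odd and $m\ge3$, the integer $n=m-1$ is even and at least $2$, so we are in the setting of Theorem~\ref{thm:main}.

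First we check the geometric hypotheses. The standard contact sphere $S^{2m-1}$ is the prequantization $S^1$-bundle of $(\CP^{m-1},\omega_{\mathrm{FS}})$. The class $[\omega_{\mathrm{FS}}]$ evaluates nontrivially on the generator of $\pi_2(\CP^{m-1})\cong\Z$, so $\omega|_{\pi_2(B)}\neq0$. By Table~\ref{tab:prequantizations}, $c_B=m=n+1$, so both $c_B>n/2$ and $c_B>n$ hold. Also $\H_k(\CP^{m-1};\Q)=0$ for every odd $k$, so either alternative of the homological hypothesis is met.

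For condition~\ref{cond:Fe} we take the filling $W=B^{2m}$, the unit ball with its standard symplectic form. It is contractible, so $c_1(TW)|_{\pi_2(W)}=0$. Since $\pi_1(S^{2m-1})=0$ for $m\ge3$, the map $\pi_1(M)\to\pi_1(W)$ is trivially injective. Condition~\ref{cond:NFe} holds as well, since $\H_2(S^{2m-1};\Q)=0$ and $\CP^{m-1}$ is positive monotone. Because $S^{2m-1}$ is simply connected, every closed orbit is contractible. Hence the index-nonzero hypothesis and the exclusion of good orbits with $\mu(\gamma)=0$ are exactly the assumptions of Theorem~\ref{thm:main}, and its conclusion counts all geometrically distinct periodic orbits.

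It remains to compute $\bar r_B$. For $\CP^{m-1}$ we have $\chi(B)=m$, and $\dim\H_n(B;\Q)=1$ because $n=m-1$ is even and $0\le n\le 2(m-1)$. Therefore
\[
\bar r_B=\tfrac12 m+\tfrac12+1=\tfrac{m+3}{2},
\]
which is the claimed bound. The argument is routine. The only step that needs any care is confirming the filling condition (or the non-filling condition), and this holds at once by simple connectivity.
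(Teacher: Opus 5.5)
Your proof is correct and follows the paper's route exactly: you apply Theorem~\ref{thm:main} to $B=\CP^{m-1}$ with $n=m-1$, and your values $\chi(\CP^{m-1})=m$ and $\dim\H_{m-1}(\CP^{m-1};\Q)=1$ give $\bar r_B=\tfrac{m+3}{2}$, which is the count the paper obtains from Ziller's homology computation. Your explicit check of the hypotheses fills in details the paper leaves implicit; these are the ball as filling, $c_B=m>n$, and simple connectivity making every orbit contractible.
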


\begin{corollary}\label{cor:main2}
	Let $(M,\xi)$ be the unit cosphere bundle $S^*N$ with $N=S^{m}$ or $\RP^{m}$, where $m$ is odd and $m\geq 3$. Let $\alpha$ be a non-degenerate contact form supporting~$\xi$. If $\alpha$ is index-nonzero and has no contractible good periodic Reeb orbits $\gamma$ with Conley--Zehnder index $\mu(\gamma)=0$, then it has at least $\frac{m+5}{2}$ 
	geometrically distinct periodic orbits.
\end{corollary}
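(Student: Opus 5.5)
We apply Theorem~\ref{thm:main} with $n=m-1$. The work is to check its hypotheses for $B=\Gr_2^+(\R^{m+1})$ and $M=S^*N$, and then compute $\bar r_B$ from the rational homology of $B$. Since $m\ge 3$ is odd, $n=m-1\ge 2$ is even, so Theorem~\ref{thm:main} is the relevant statement.

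\emph{Geometric hypotheses.} As recalled before the corollaries, $(S^*N,\xi)$ is the prequantization $S^1$-bundle over $(\Gr_2^+(\R^{m+1}),\omega)$. This base is the complex quadric $Q_{n}\subset\CP^{n+1}$ of complex dimension $n=m-1$.
\begin{itemize}
\item By Table~\ref{tab:prequantizations}, $c_B=m-1=n$. Hence $c_B>n/2$, since $n\ge2$.
\item $\omega|_{\pi_2(B)}\ne0$, because $\omega$ is a positive multiple of the Fubini--Study form on a line in $Q_n$.
\item By Ziller's computation~\cite[p.~141]{Zil}, $\H_k(B;\Q)=0$ for all odd $k$.
\end{itemize}

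\emph{Filling condition \textup{(F)}.} Take the unit disk cotangent bundle $(W,\Omega)=(D^*N,d\lambda_{\mathrm{can}})$. This is a strong (indeed exact) filling of $S^*N$.
\begin{itemize}
\item The bundle $TW$ restricted to the zero section is $TN\otimes\mathbb C$. Its first Chern class is $2$-torsion, so it evaluates to zero on spherical classes, and $c_1(TW)|_{\pi_2(W)}=0$.
\item The map $\pi_1(S^*N)\to\pi_1(D^*N)\cong\pi_1(N)$ is induced by the sphere bundle projection, whose fibre is $S^{m-1}$ with $m-1\ge2$. The homotopy exact sequence shows that this map is an isomorphism, in particular injective.
\end{itemize}
The dynamical hypotheses on $\alpha$ are exactly those of Theorem~\ref{thm:main}. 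Hence $\alpha$ has at least $\bar r_B$ geometrically distinct contractible periodic orbits.

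\emph{Computing $\bar r_B$.} For $n$ even, the quadric $Q_n$ has $\H_j(B;\Q)\cong\Q$ for every even $j$ with $0\le j\le 2n$ and $j\ne n$, and $\H_n(B;\Q)\cong\Q^2$; the odd groups vanish. Therefore
\[
\chi(B)=n+2=m+1,
\]
which agrees with $r_B=m+1$ in Table~\ref{tab:prequantizations}, and $\dim\H_n(B;\Q)=2$. Substituting,
\[
\bar r_B=\tfrac12(m+1)+\tfrac12\cdot 2+1=\tfrac{m+5}{2},
\]
which is the asserted bound.

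The only step needing care is the rank-two middle homology of the even-dimensional quadric. It comes from the two rulings by maximal isotropic subspaces, equivalently the extra Euler class in the Gysin description, and is exactly what Ziller's table gives. The remaining verifications are routine.
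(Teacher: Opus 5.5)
Your proposal is correct and follows the paper's own route. You apply Theorem~\ref{thm:main} with $n=m-1$ to the base $\Gr_2^+(\R^{m+1})$ and use Ziller's homology computation, which gives $\chi(B)=m+1$ and $\dim\H_n(B;\Q)=2$, hence $\bar r_B=\frac{m+5}{2}$. Your explicit checks of $c_B=n>n/2$, the vanishing odd homology, and condition (F) via $D^*N$ spell out what the paper takes as known from \cite{GGM18,AbM24}.
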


\begin{remark}
	Corollary~\ref{cor:main1} gives the multiplicity bound of
	\cite[Theorem~1.1]{DuX25b} in the contact setting of the
	standard contact sphere.

	\c{C}ineli, Ginzburg, and G\"urel~\cite[Theorem~B]{CGG26} proved that a non-degenerate Reeb flow on the boundary of a star-shaped domain in $\R^{2m}$ has infinitely many simple closed orbits if it has a closed orbit with negative mean index. Thus, in Corollary~\ref{cor:main1}, if there are only finitely many simple closed orbits, the index-nonzero assumption implies that $\alpha$ is index-positive.
	
	For Corollary~\ref{cor:main2}, in general the Reeb flow of a supporting 
	contact form $\alpha$ on $S^*N$ is not a geodesic flow. However, if $\alpha$ 
	is induced by a bumpy Finsler metric $F$ on $N$ (so that $S^*N$ is the unit 
	cosphere bundle of $(N,F)$ and $\alpha$ is the restriction of the Liouville 
	form), then the Reeb flow is a reparametrized geodesic flow and the 
	Conley--Zehnder index of a closed Reeb orbit coincides with the Morse index of 
	the corresponding closed geodesic (see \cite{Liu05} and 
	Sections~\ref{sec:index_orbit},~\ref{sec:IIF} below). In particular, Bott's 
	iteration formula (see, for instance, \cite[Theorem~9.2.1]{Long02}) implies 
	that, in this Finsler situation, it suffices to assume that $\alpha$ has no 
	simple contractible periodic orbits $\gamma$ with $\mu(\gamma)=0$ in order to 
	ensure that there is no contractible good orbit with $\mu(\gamma)=0$ and that 
	$\alpha$ is index-positive (and hence index-nonzero) in our sense. Consequently, Corollary~\ref{cor:main2} contains \cite[Theorem~1.1]{DuX25a} 
	(stated for Finsler geodesic flows on $(S^{m},F)$) as a special case.
\end{remark}

In dimension five ($n=2$), Remark~\ref{rem:comparison-abreu-macarini}
gives $\bar r_B=r_B$ whenever $\H_1(B;\mathbb Q)=0$.
Suppose that the contact form has only finitely many geometrically
distinct contractible closed orbits. Lemmas~\ref{lemma:nonzeroindex}
and~\ref{lemma:nonzeromean} then show that excluding simple
contractible orbits of index zero suffices for both index assumptions
of Theorem~\ref{thm:main}. This gives the conjectural lower bound in
 dimension five under the following hypotheses.

\begin{theorem}\label{thm:minor}
	Let $(M^{5},\xi)$ be a prequantization $S^1$-bundle of a closed symplectic manifold $(B,\omega)$ such that $\omega|_{\pi_2(B)}\neq 0$, $c_B>1$, and $\H_{1}(B;\mathbb{Q})=0$. Let $\alpha$ be a non-degenerate contact form supporting $\xi$ that has no simple contractible periodic Reeb orbits $\gamma$ with Conley--Zehnder index $\mu(\gamma)=0$. Assume that one of the following conditions holds:
	\begin{itemize}
		\item[\reflb{cond:F0}{(F)}] $M$ admits a strong symplectic filling $(W,\Omega)$ such that $c_1(TW)|_{\pi_2(W)}=0$, and the inclusion-induced map $\pi_1(M) \to \pi_1(W)$ is injective. 
		\item[\reflb{cond:NF0}{(NF)}] $c_1(\xi)|_{\H_2(M;\Q)}=0$.
	\end{itemize}
	Then $\alpha$ carries at least $r_B$ geometrically distinct contractible periodic orbits.
\end{theorem}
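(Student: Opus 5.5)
The plan is to reduce Theorem~\ref{thm:minor} to Theorem~\ref{thm:main} with $n=2$, following the roadmap indicated just before its statement.

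\textbf{Step 1: reduce to finitely many orbits and check the geometric hypotheses.} If $\alpha$ has infinitely many geometrically distinct contractible closed orbits, there is nothing to prove. So assume there are only finitely many.

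Next I check that the geometric hypotheses of Theorem~\ref{thm:main} hold for $n=2$.
\begin{itemize}
\item $c_B>1=n/2$ is assumed.
\item Either odd-degree rational homology vanishes or $c_B>n$. Since $c_B>1$ means $c_B\ge 2$, it is not immediate that $c_B>2$. However, $\H_1(B;\Q)=0$ together with Poincar\'e duality in dimension $4$ gives $\H_3(B;\Q)=0$, so all odd-degree rational homology vanishes.
\item Condition (F) is identical in the two theorems.
\item Condition (NF) of Theorem~\ref{thm:main} additionally asks for $c_B\ge n=2$ when $B$ is spherically negative monotone. This follows from $c_B>1$.
\end{itemize}

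\textbf{Step 2: derive the index hypotheses from the simple-orbit assumption.} This is the main obstacle. Theorem~\ref{thm:main} needs two things: that $\alpha$ is index-nonzero, and that no good contractible orbit has $\mu=0$. We only know that no \emph{simple contractible} orbit has $\mu=0$. The paper announces Lemmas~\ref{lemma:nonzeroindex} and~\ref{lemma:nonzeromean} for exactly this purpose under the finiteness assumption. My plan for establishing them is as follows.

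\emph{Mean index.} Suppose a contractible orbit $\gamma$ has $\hat\mu(\gamma)=0$. By non-degeneracy, all iterates $\gamma^k$ then have $|\mu(\gamma^k)|\le n=2$. Using the Long/Liu index iteration formula, I would show that some iterate with $\mu=0$ occurs, or that the simple contractible orbit underlying $\gamma$ has $\mu=0$. Alternatively, I would argue via the resonance relations / mean Euler characteristic of equivariant symplectic homology (known in the prequantization setting, from \cite{GGM18}) that an orbit with zero mean index contributes in a way incompatible with the computed $\HC$ under finiteness. In dimension $2n=4$ of the contact structure, the possible Floquet configurations for $\hat\mu=0$ are limited: either hyperbolic with index $0$, or elliptic with rational rotations. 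In the latter case some iterate is degenerate, which contradicts non-degeneracy. The hyperbolic case gives $\mu$ of the simple orbit equal to $0$, which is excluded.

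\emph{Good orbits of index zero.} If a good iterate $\gamma^k$ has $\mu(\gamma^k)=0$, then $\hat\mu(\gamma)\ne0$ forces $|k\hat\mu(\gamma)|<2$. The iteration formula in the $4$-dimensional case, $\mu(\gamma^k)=k\hat\mu\pm$ (a bounded correction from the elliptic part), should then give $\mu(\gamma)=0$ for the simple contractible orbit $\gamma$, or else a parity contradiction with goodness. I expect a case check over the normal forms in $\Sp(4)$ to finish this.

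\textbf{Step 3: apply Theorem~\ref{thm:main} and compute the bound.} Theorem~\ref{thm:main} yields at least $\bar r_B$ geometrically distinct contractible orbits. By Remark~\ref{rem:comparison-abreu-macarini} with $n=2$ and $\H_1(B;\Q)=0$, we have $\bar r_B=r_B$. Explicitly, $\chi(B)=2+b_2$ and $\dim\H_2(B;\Q)=b_2$, so
\[
\bar r_B=\tfrac12(2+b_2)+\tfrac12 b_2+1=2+b_2=r_B .
\]
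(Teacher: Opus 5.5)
Your Steps~1 and~3 are the paper's reduction: $\H_3(B;\mathbb Q)=0$ by Poincar\'e duality, $c_B\ge 2=n$, then Theorem~\ref{thm:main} with $n=2$, giving $\bar r_B=\frac12(2+b_2)+\frac12 b_2+1=r_B$.

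The gap is Step~2. It carries all the content of the theorem, you do not prove it, and the case analysis you sketch for the mean index is false. In $\Sp(4)$, zero mean index forces neither hyperbolicity nor rational rotation angles. By \eqref{meanindex-iterate}, $\hmu(\gamma)=\mu(\gamma)-r+\sum_j\theta_j/\pi$, and for $r=2$ two irrational angles can sum to $2\pi$. For example, $\Phi(t)=R(t\theta)\diamond R(-t\theta)$ with $\theta/2\pi\notin\mathbb Q$ is elliptic and strongly non-degenerate, with $\hmu(\Phi)=0$. An $N_2$ block with irrational angle is also elliptic and has $\hmu=\mu$. No iterate degenerates in these cases, so your argument does not exclude them.

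Parity is what excludes them. For $r=2$, the index $\mu(\gamma)$ is even by Proposition~\ref{index:odd-even} and additivity. On the other hand, $\hmu=0$ gives $\mu(\gamma)=2-(\theta_1+\theta_2)/\pi\in(-2,2)$. Hence $\mu(\gamma)=0$, contradicting the hypothesis on simple contractible orbits. The remaining cases are easy: $r=1$ makes $\hmu$ irrational, hence nonzero, and $r=0$ gives $\hmu=\mu$. Your alternative via the resonance relation is circular, since \eqref{eq:resonance} is only available once $\alpha$ is known to be index-nonzero.

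For good iterates of index zero, ``a case check should finish this'' is not an argument. The missing step goes as follows. A bad orbit has only bad iterates. So if $\gamma^k$ is good, with $\gamma$ simple contractible, then $\gamma$ is good and $\mu(\gamma)\equiv\mu(\gamma^k)=0\pmod 2$. Since $\mu(\gamma)\neq 0$ by hypothesis, $|\mu(\gamma)|\ge 2$. On the other hand, \eqref{index-iterate} together with $0\le\lfloor k\theta_j/2\pi\rfloor\le k-1$ gives $|\mu(\gamma^k)-k\mu(\gamma)|\le (k-1)r\le 2(k-1)<2k$. Therefore $\mu(\gamma^k)=0$ is impossible.

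Both arguments use $r\le 2$, i.e.\ $n=2$. Only after these two lemmas are established does your appeal to Theorem~\ref{thm:main} become legitimate.
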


\begin{remark}
	For the standard contact sphere $S^5$,
	Theorem~\ref{thm:minor} recovers the result of
	Duan, Liu, Long, Qi, and Wang~\cite[Theorem~1.1(ii)]{DLLQW25}
	for non-degenerate star-shaped hypersurfaces in $\R^6$.
	Their condition excludes simple closed characteristics
	with Maslov-type index $-1$, which corresponds to
	Conley--Zehnder index $0$ in our convention.
	
\end{remark}

\subsection{Organization and notation}

\subsubsection{Organization}

In Section~2 we collect the necessary preliminaries, including a brief review of the Conley--Zehnder index for symplectic paths and periodic Reeb orbits, the positive \(S^1\)-equivariant symplectic homology of prequantization bundles, as well as local equivariant symplectic homology, the resonance relation, and Morse-type inequalities. Section~3 is devoted to index iteration theory for symplectic paths; in particular, we recall an index recurrence theorem that will be a key ingredient in our multiplicity arguments. In Section~4 we apply these tools to prove our main multiplicity result for contractible closed Reeb orbits on prequantization \(S^1\)-bundles. Finally, in Section~5 we prove Theorem~\ref{thm:minor}.

\subsubsection{Conventions and notation}\label{subsec:notation}

Throughout this paper, let $\mathbb{N}$, $\mathbb{Z}$, $\mathbb{Q}$ and $\mathbb{R}$ denote the sets of positive integers, integers, rational numbers and real numbers, respectively. 
For two symplectic matrices in block form
\[
M_k = \begin{pmatrix}
A_k & B_k \\
C_k & D_k
\end{pmatrix} \in \mathrm{Sp}(2m_k), \quad k=1,2,
\]
their $\diamond$-product $M_1 \diamond M_2 \in \mathrm{Sp}(2m_1 + 2m_2)$ is defined by
\[
M_1 \diamond M_2 = \begin{pmatrix}
A_1 & 0 & B_1 & 0 \\
0 & A_2 & 0 & B_2 \\
C_1 & 0 & D_1 & 0 \\
0 & C_2 & 0 & D_2
\end{pmatrix},
\]
which represents the \emph{symplectic direct sum} in the canonical coordinates on $\mathbb{R}^{2(m_1+m_2)}$.

\section{Preliminaries}

\subsection{The Conley--Zehnder index for paths of symplectic matrices}
\label{sec:index_paths}

We recall the definition and properties of the Conley--Zehnder index, following the conventions and notation of \cite[Section~ 4]{GG20}. For a detailed treatment, we refer to \cite{Long02,SZ92}.

Let $\Phi\colon [0,1]\to\Sp(2n)$ be a continuous path with $\Phi(0)=I$. The \emph{mean index} $\hmu(\Phi)\in\mathbb{R}$ is a homotopy invariant of the path with fixed endpoints. It measures the total rotation angle of certain eigenvalues of $\Phi(t)$ on the unit circle. For a homotopy family of paths $\Phi_s$, we have $\hmu(\Phi_s)\equiv\text{constant}$ provided the eigenvalues of $\Phi_s(1)$ remain constant.

We identify such paths, up to homotopy with fixed endpoints,
with elements of the universal covering group $\TSp(2n)$. The map $\hmu\colon \widetilde{\Sp}(2n)\to\mathbb{R}$ is the unique continuous homogeneous quasimorphism on the universal covering group $\TSp(2n)$ satisfying the normalization condition
\[
\hmu(\Phi_0)=2\quad \text{for}\quad \Phi_0(t)= \exp(2\pi i t)\diamond I_{2n-2},\quad t\in [0,1],
\]
as established in \cite{BG92}. Here homogeneity means $\hmu(\Phi^k)=k\hmu(\Phi)$ for all $k\in\mathbb{N}$, and the quasimorphism property asserts that
\begin{equation*}
|\hmu(\Phi\cdot\Psi)-\hmu(\Phi)-\hmu(\Psi)|\leq C_n,
\end{equation*}
where  $C_n$ depends only on $n$ (and may be taken as $C_n=4n$ according to \cite{Ust99}).

A path $\Phi\in\TSp(2n)$ is called \emph{non-degenerate} if $\det(\Phi(1)-I)\neq 0$, i.e., if no eigenvalue of the endpoint $A=\Phi(1)$ equals 1. Let $\Sp^*(2n)$ denote the set of such matrices, and $\TSpn(2n)$ the part of $\TSp(2n)$ lying over
$\Sp^*(2n)$. For a non-degenerate path $\Phi$, the \emph{Conley--Zehnder index} $\mu(\Phi)\in\mathbb{Z}$ is defined as follows: connect $A=\Phi(1)$ to a symplectic transformation with elliptic part $-I$ (when non-trivial) by a path $\Psi$ contained in $\Sp^*(2n)$ (it is not hard to see that such a path exists), and set $\mu(\Phi)=\hmu(\Phi\ast\Psi)$, where $\Phi\ast\Psi$ denotes the concatenation. One can show that this definition is independent of the choice of $\Psi$ and the function $\mu\colon \TSpn(2n)\to\mathbb{Z}$ is constant
on connected components of $ \TSpn(2n) $, namely, it is locally constant. A path $\Phi\in\TSp(2n)$ is \emph{strongly non-degenerate} if $\Phi^k$ is non-degenerate for each $ k\in\N $, i.e., if no eigenvalue of $\Phi(1)$ is a root of unity.

The following properties are established in, e.g., \cite{Long02,SZ92}:
\begin{enumerate}
	\item For any non-degenerate path $\Phi$, $\mu(\Phi^{-1}) = -\mu(\Phi)$.
	\item Both indices are additive under direct sums: for $\Phi\in\TSp(2n)$ and $\Psi\in\TSp(2n')$,
	\[
	\hmu(\Phi\diamond\Psi) = \hmu(\Phi) + \hmu(\Psi), \quad 
	\mu(\Phi\diamond\Psi) = \mu(\Phi) + \mu(\Psi),
	\]
	where the second equality requires both paths to be non-degenerate.
	\item The indices satisfy the inequality
	\begin{equation}
	\label{eq:range}
	|\hmu(\Phi) - \mu(\Phi)| < n \quad \text{for all } \Phi\in\TSpn(2n),
	\end{equation}
	and consequently, for strongly non-degenerate $ \Phi $ 
	\begin{equation*}
	\lim_{k\to\infty} \frac{\mu(\Phi^k)}{k} = \hmu(\Phi),
	\end{equation*}
	which justifies the terminology ``mean index'' for $\hmu$.
\end{enumerate}

\subsection{The Conley--Zehnder index of periodic orbits}
\label{sec:index_orbit}

Let $\gamma$ be a strongly non-degenerate closed Reeb orbit of period $T>0$. Parametrize it on $S^1=\mathbb R/\mathbb Z$ by $\gamma(t)=\phi_\alpha^{Tt}(\gamma(0))$, and let $\Psi\colon \gamma^*\xi \to S^1 \times \mathbb{R}^{2n}$ be a symplectic trivialization of the contact structure $\xi$ along $\gamma$. For each $t$, define $\Psi_t\colon \xi(\gamma(t)) \to \mathbb{R}^{2n}$ as the composition of $\Psi|_{\gamma^*\xi(t)}$ with projection onto the second factor. Through this trivialization, the linearized Reeb flow induces a path of symplectic matrices
\[
\Phi(t) = \Psi_t \circ d\phi_\alpha^{Tt}(\gamma(0))|_\xi \circ \Psi_0^{-1},\qquad 0\leq t\leq1,
\]
where $\phi^t_\alpha$ denotes the Reeb flow of $\alpha$. The Conley--Zehnder index and mean index of $\gamma$ with respect to the trivialization $\Psi$ are then defined as
\[
\mu(\gamma;\Psi) = \mu(\Phi) \quad \text{and} \quad \hmu(\gamma;\Psi) = \hmu(\Phi),
\]
respectively. These indices depend only on the homotopy class of the trivialization $\Psi$. Specifically, if $\Upsilon$ is another trivialization of $\gamma^*\xi$, then
\[
\mu(\gamma;\Upsilon) = \mu(\gamma;\Psi) + 2\maslov(\Upsilon_t \circ \Psi_t^{-1}),
\]
where $\maslov$ denotes the Maslov index, which provides an isomorphism between $\pi_1(\Sp(2n))$ and $\mathbb{Z}$. In particular, the parity of the Conley--Zehnder index is independent of the choice of trivialization. Moreover, the Conley--Zehnder indices of all even iterates have the same parity, and the same holds for all odd iterates: for all $j,\,k \in \N$,
\begin{equation}\label{same parity}
\cz(\ga^{2j};\Psi^{2j}) \equiv \cz(\ga^{2k};\Psi^{2k})\ \text{and}\
\cz(\ga^{2j-1};\Psi^{2j-1}) \equiv \cz(\ga^{2k-1};\Psi^{2k-1})\
\pmod{2}. 
\end{equation}

A periodic orbit $\gamma$ is called \emph{good} if its Conley--Zehnder index has the same parity as that of the underlying simple orbit. Otherwise, $\gamma$ is called \emph{bad}. As noted above, this classification is independent of the choice of trivialization.

When $\gamma$ is contractible, there is a canonical way to choose a trivialization up to homotopy. Let $\sigma\colon D^2 \to M$ be a capping disk for $\gamma$, i.e., a smooth map from the 2-disk $D^2$ to $M$ such that $\sigma|_{\partial D^2} = \gamma$. Choose a trivialization of the pullback bundle $\sigma^*\xi$ over $D^2$, and let $\Psi$ be its restriction to the boundary $\partial D^2 \cong S^1$. Since $D^2$ is contractible, the homotopy class of $\Psi$ is independent of the choice of trivialization of $\sigma^*\xi$. Furthermore, if
$$c_1(\xi)|_{\pi_2(M)}=0,$$
 then the homotopy class of $\Psi$ is also independent of the choice of capping disk $\sigma$. 

Throughout this paper, for a contractible orbit $\gamma$, we denote by $\mu(\gamma)$ and $\hmu(\gamma)$ the Conley--Zehnder index and mean index computed with respect to this standard trivialization.

\subsection{Equivariant Symplectic Homology of Prequantizations}\label{sec:esh_preq}

We use positive $S^1$-equivariant symplectic homology
for contractible periodic orbits, graded by the
Conley--Zehnder index; see
\cite[Section~3.3]{GGM18} and
\cite[Proposition~3.3]{GG20}.
Suppose that $(M,\xi)$ admits a strong filling $(W,\Omega)$
with $\Omega|_{\pi_2(W)}=0$ and $c_1(TW)|_{\pi_2(W)}=0$.
For a non-degenerate contact form $\alpha$ supporting $\xi$,
the positive $S^1$-equivariant symplectic homology of $W$
is computed by a chain complex generated by the good closed
Reeb orbits of $\alpha$.
This complex decomposes as a direct sum of subcomplexes
indexed by the free homotopy classes of loops in $W$.
Each subcomplex is generated by the orbits in the corresponding
class.
If $\pi_1(M)\to\pi_1(W)$ is injective, an orbit in $M$ is
contractible in $W$ if and only if it is contractible in $M$.
We denote the corresponding subcomplex by
$\mathcal C_*^0(\alpha)$ and its homology by
\[
\HC_*^0(M):=\H_*(\mathcal C_*^0(\alpha)).
\]
This complex is defined over $\mathbb Q$, graded by the Conley--Zehnder index computed using capping disks, and
filtered by action. Its differential depends on auxiliary choices, but its homology
is independent of these choices and may a priori depend on the filling.

In the non-filling case, assume that
$c_1(\xi)|_{\pi_2(M)}=0$ and that every contractible closed
Reeb orbit of $\alpha$ has index greater than $3-n$.
The construction in \cite[Section~4.1.2]{BO17} then defines
positive $S^1$-equivariant symplectic homology for periodic
Reeb orbits contractible in $M$.
As in \cite[Section~3.3]{GGM18}, we again denote this homology
by $\HC_*^0(M)$.
The required index bound follows if $\alpha$ is both
index-positive and index-admissible, by \eqref{eq:range}.
The Novikov-coefficient and linearized contact homology
constructions used under the broader hypotheses of
Theorem~\ref{thm:main} are described at the beginning of
Section~\ref{sec:proof-main}.

The following proposition computes the equivariant symplectic homology of the prequantization \(S^1\)-bundles considered below in terms of the homology of the base; this computation will be used throughout the paper.

\begin{proposition}[{\cite[Proposition 3.1]{GGM18}}]\label{prop:CH}
	Let $(M^{2n+1},\xi)$ be a prequantization of a closed symplectic manifold $(B,\omega)$ such that $\omega|_{\pi_2(B)} \neq 0$ and either $\H_{k}(B;\mathbb{Q})=0$ for every odd $k$ or $c_B>n$.
	
	\begin{itemize}
		\item[{\rm (a)}] Suppose that $M$ admits a strong symplectic filling $(W,\Omega)$ satisfying $$\Omega|_{\pi_2(W)}=0=c_1(TW)|_{\pi_2(W)},$$ and such that the inclusion-induced map $\pi_1(M) \to \pi_1(W)$ is injective. Then $B$ is spherically monotone.
		When $B$ is spherically positive monotone, the positive equivariant symplectic homology for contractible periodic orbits of $M$ is given by
		\begin{equation}
		\label{eq:CH}
		\HC^0_\ast(M) \cong \bigoplus_{m\in\mathbb{N}} \H_{\ast -2mc_B+n} (B; \mathbb{Q}).
		\end{equation}
		When $B$ is spherically negative monotone, we have
		
		\begin{equation}
		\label{eq:CH-nm}
		\HC^0_\ast(M) \cong \bigoplus_{m\in\mathbb{N}} \H_{\ast +2mc_B+n} (B; \mathbb{Q}).
		\end{equation}
		
		In particular, in both cases the homology is independent of the choice of the filling $W$ satisfying the above conditions.
		
		\item[{\rm (b)}] Alternatively, assume that $B$ is spherically positive monotone with $c_B\geq 2$, $c_1(\xi)=0$, and $\alpha$ is a non-degenerate contact form on $(M,\xi)$ such that all contractible closed Reeb orbits have index greater than $3-n$. Then \eqref{eq:CH} also holds.
	\end{itemize}
\end{proposition}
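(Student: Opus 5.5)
The plan is to compute $\HC^0_*(M)$ using the standard prequantization contact form $\alpha_0$, the connection form whose Reeb flow is the free circle action on the fibres. All closed Reeb orbits of $\alpha_0$ are iterated fibres. They come in Morse--Bott families, each diffeomorphic to $M$, with $S^1$-quotient $B$. By the invariance of positive equivariant symplectic homology under changing the supporting form (with the filling fixed, or with the index bound $>3-n$ in case (b)), it suffices to run a Morse--Bott computation for $\alpha_0$. This is the Bourgeois--Oancea Morse--Bott spectral sequence for equivariant symplectic homology, whose $E^1$-page is a direct sum over the contractible families of $\H_*(B;\mathbb Q)$, shifted by the Robbin--Salamon index of the family.

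\textbf{Step 1 (which iterates are contractible).} The homotopy exact sequence $\pi_2(M)\to\pi_2(B)\to\pi_1(S^1)\to\pi_1(M)$ sends $S\in\pi_2(B)$ to $\langle c_1,S\rangle=-\langle[\omega],S\rangle$. So the $k$-fold fibre is contractible exactly when $k\in N\mathbb Z$, where $N$ generates the subgroup $[\omega](\pi_2(B))\subset\mathbb Z$; this subgroup is nonzero because $\omega|_{\pi_2(B)}\neq0$.

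\textbf{Step 2 (monotonicity in (a), and the index shifts).} A capping disk of $\gamma^{k}$ can be built from a sphere $S$ with $\langle[\omega],S\rangle=k$, projected from a disk in $M$. With this capping, the linearized flow along the fibre is trivial in the horizontal direction, $\xi\cong\pi^*TB$. Hence the mean index of the family is $\hmu=2\langle c_1(TB),S\rangle$ up to sign convention, and the action is $k$ times the fibre period. In case (a), $c_1(TW)|_{\pi_2(W)}=0$ makes the index independent of the capping, and $\Omega|_{\pi_2(W)}=0$ makes the action equal to $\int\alpha$, also independent of the capping. Two spheres $S,S'$ with the same $\omega$-area must therefore have the same $c_1(TB)$, which gives $c_1(TB)|_{\pi_2(B)}=\lambda[\omega]|_{\pi_2(B)}$. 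Here $\lambda\neq0$ because $c_B\neq0$, and $c_B$ is exactly the value $c_1(TB)$ takes on the generator $S$ realising $N$. The $m$-th contractible family therefore has mean index $\pm 2mc_B$, with sign equal to the sign of $\lambda$. Its Robbin--Salamon index, shifted by $-n$ from half the dimension of the Morse--Bott manifold $B$, contributes $\H_{*-2mc_B+n}(B;\mathbb Q)$ in the positive case and $\H_{*+2mc_B+n}(B;\mathbb Q)$ in the negative case. In case (b), monotonicity with $c_B\ge 2$ is assumed, and $c_1(\xi)=0$ is used in place of the filling to make the grading well defined; the index bound ensures that the Bourgeois--Oancea non-filling construction applies.

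\textbf{Step 3 (degeneration).} I expect this to be the main obstacle: showing that the spectral sequence collapses at $E^1$, so that the result is canonical and independent of the filling. If $\H_{\mathrm{odd}}(B;\mathbb Q)=0$, then every generator lies in degrees of a single parity, because $2mc_B$ is even. All differentials change degree by one, so they all vanish. If instead $c_B>n$, consecutive families occupy the degree windows $[2mc_B-n,\,2mc_B+n]$. These are separated by gaps, since $2c_B>2n$, so a differential of degree $-1$ cannot connect different families. Within a single family the Morse--Bott differential is the Morse differential on $B$, whose homology is $\H_*(B;\mathbb Q)$. Justifying carefully that higher differentials also vanish and that the filling enters only through these degree considerations is the technical core. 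For this I would follow the argument of \cite{GGM18} and verify the grading bookkeeping, which gives \eqref{eq:CH} and \eqref{eq:CH-nm}.
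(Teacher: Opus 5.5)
Your proposal is correct and follows essentially the route of the cited proof \cite[Proposition~3.1]{GGM18}. You run a Morse--Bott computation for the connection form, in which the $m$th contractible family contributes a copy of $\H_*(B;\mathbb{Q})$ placed in degrees $\operatorname{ind}\pm 2mc_B-n$, and the spectral sequence then degenerates for parity reasons or because of the gaps when $c_B>n$; your $+n$ shift in the negative monotone case agrees with the corrected grading in Remark~\ref{rm:negative-grading}. Two small points should be made explicit: $\lambda\neq0$ uses the assumption $c_B\neq0$, which is built into the definition of spherical monotonicity rather than derived from the filling, and in (b) the hypothesis $c_B\ge2$ places every perturbed fibre orbit in degree at least $2c_B-n>3-n$, which is what lets the non-filling invariance compare $\alpha$ with the connection form.
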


\begin{remark}\label{rm:negative-grading}
Suppose that $B$ is spherically negative monotone. The $m$th family of contractible Reeb orbits of the connection form has orbit space $B_m\cong B$ and Robbin--Salamon index $\mu_{\mathrm{RS}}(B_m)=-2mc_B$. By \cite[Lemma~2.4 and Section~9.1]{Bou02}, the orbit corresponding to a critical point $p$ of a Morse function $f$ on $B$ satisfies
\begin{equation}\label{eq:negative-fiber-index}
\mu(\gamma_{m,p})
=\mu_{\mathrm{RS}}(B_m)-\tfrac12\dim B+\operatorname{ind}_f(p)
=-2mc_B-n+\operatorname{ind}_f(p).
\end{equation}
By \eqref{eq:negative-fiber-index}, an orbit of index $k$ corresponds to a critical point of Morse index $k+2mc_B+n$. Thus the shift in \eqref{eq:CH-nm} is $+n$, rather than
the $-n$ in \cite[Proposition~3.1]{GGM18}.
\end{remark}

\begin{remark}\label{rm:chern-zero-index}
Suppose that $n$ is even and $c_B>0$.
Assume that the homology of the complex generated by the
good contractible closed Reeb orbits of $\alpha$, graded
by the Conley--Zehnder index, is computed by
\eqref{eq:CH} or \eqref{eq:CH-nm}, according to the sign
of the monotonicity constant.
Then the absence of good contractible closed Reeb orbits
of index zero forces $c_B>n/2$.

Indeed, suppose that $0<c_B\le n/2$.
In the positive monotone case, \eqref{eq:CH} gives
\[
    \HC_0^0(M)
    \cong
    \bigoplus_{m\ge1}\H_{n-2mc_B}(B;\mathbb Q),
\]
whereas in the negative monotone case, \eqref{eq:CH-nm} gives
\[
    \HC_0^0(M)
    \cong
    \bigoplus_{m\ge1}\H_{n+2mc_B}(B;\mathbb Q).
\]
In either case, the summand with $m=1$ is nonzero.
To see this, note that $n-2c_B$ and $n+2c_B$ are even
integers in $[0,2n]$, and that every even-degree rational
homology group of $B$ in this range is nonzero.
The latter follows from $[\omega]^j\ne0$ in
$\H^{2j}(B;\mathbb R)$ for $0\le j\le n$.
Thus $\HC_0^0(M)\ne0$.
On the other hand, the absence of good contractible orbits
of index zero implies that the degree-zero chain group
vanishes, so $\HC_0^0(M)=0$, a contradiction.
The same argument applies with coefficients in the
Novikov field used under condition~\ref{cond:Fe}.

This is why we state the condition $c_B>n/2$ explicitly
in Theorem~\ref{thm:main}.
The obstruction from nonzero degree-zero homology is also
discussed in \cite[Remark~2.8]{AbM26}.
\end{remark}

\subsection{Local Equivariant Symplectic Homology, Resonance Relation
 and Morse Inequalities}\label{sec:lesh}

For an isolated contractible closed Reeb orbit $\gamma$, let $\HC_*(\gamma)$ denote its local equivariant symplectic homology. If $\gamma$ is non-degenerate, then
\begin{equation}\label{lesh:nd}
\HC_*(\gamma) =
\begin{cases}
\Q, & \text{if $\gamma$ is good and $*=\mu(\gamma)$}, \\
0, & \text{otherwise}.
\end{cases}
\end{equation}
The Euler characteristic of $\gamma$ is defined by
\[
\chi(\gamma) = \sum_{m \in \mathbb{Z}} (-1)^m \dim \HC_m(\gamma).
\]
This sum is finite. When $\gamma$ is non-degenerate,
\begin{equation*}
\chi(\gamma) =
\begin{cases}
(-1)^{\mu(\gamma)}, & \text{if $\gamma$ is good}, \\
0, & \text{if $\gamma$ is bad}.
\end{cases}
\end{equation*}
Assume that every iterate of $\gamma$ is isolated.
The local \emph{mean} Euler characteristic of $\gamma$ is
\[
\hat{\chi}(\gamma) = \lim_{j\to\infty} \frac{1}{j} \sum_{k=1}^j \chi(\gamma^k).
\]
The limit above exists and is rational; see \cite{GGo}. When $\gamma$ is strongly non-degenerate, we have
\begin{equation*}
\hat{\chi}(\gamma)=
\begin{cases}
0, & \text{if $\gamma$ is bad}, \\
(-1)^{\cz(\gamma)}, & \text{if $\gamma$ and $\gamma^2$ are good}, \\
(-1)^{\cz(\gamma)}/2, & \text{if $\gamma$ is good and $\gamma^2$ is bad}.
\end{cases}
\end{equation*}

A simple contractible orbit may be a bad even iterate of a non-contractible simple orbit. In that case all its iterates are bad, so $\hat\chi(\gamma)=0$.

Assume now that $\alpha$ is index-nonzero and has finitely many distinct simple contractible closed orbits $\gamma_1,\dots,\gamma_r$.
This assumption ensures that the positive/negative mean Euler characteristic of $M$,
\begin{equation*}
\chi_\pm(M) := \lim_{j\to\infty} \frac{1}{j} \sum_{m=0}^j (-1)^m b_{\pm m},
\end{equation*}
is well-defined, where $b_m := \dim \HC^0_m(M)$ is the $m$-th Betti number; see \cite{GGo}. The mean Euler characteristic is related to local equivariant symplectic homology via the resonance relation
\begin{equation}
\label{eq:resonance}
\begin{cases}
\displaystyle
\sum_{\substack{1\leq i\leq r \\ \hat{\mu}(\gamma_i)>0}} \frac{\hat{\chi}(\gamma_i)}{\hat{\mu}(\gamma_i)} = \chi_{+}(M),\\
\displaystyle
\sum_{\substack{1\leq i\leq r \\ \hat{\mu}(\gamma_i)<0}} \frac{\hat{\chi}(\gamma_i)}{|\hat{\mu}(\gamma_i)|} = \chi_{-}(M),
\end{cases}
\end{equation}
proved in \cite{GK,HM}.

Let 
\[
c_m := \sum_{i=1}^r \sum_{k=1}^\infty \dim \HC_m(\ga_i^k)
\]
be the $m$-th Morse-type number. When $\alpha$ is non-degenerate, $c_m$ is simply the number of good orbits of index $m$. 

For any $m \in \mathbb{Z}$ and any two integers $m_1$ and $m_2$ with $m_1 \le m_2$ and the same parity, the following Morse inequalities hold:
\begin{eqnarray}
c_m &\geq& b_m,\label{eq:Morse_inequalities1}\\
c_{m_1} - c_{m_1+1} + \dots + c_{m_2} &\geq& b_{m_1} - b_{m_1+1} + \dots + b_{m_2}.\label{eq:Morse_inequalities2}
\end{eqnarray}

\section{Index Iteration Theory for Symplectic Paths}

\subsection{Index Iteration Formula}\label{sec:IIF}
In 1999, Y. Long established the basic normal form decomposition of symplectic matrices \cite{Long99}. Building on this foundation, he further developed precise iteration formulae for the indices of symplectic paths in 2000 \cite{Long00}. 

As in \cite{Long00}, denote by 
\begin{align}
D(\lm) &= \begin{pmatrix} \lm & 0 \\ 0 & \lm^{-1} \end{pmatrix}, \quad \text{for } \lm\in\R\setminus\{0, \pm 1\}, \nn \\
R(\theta) &= \begin{pmatrix} \cos\theta & -\sin\theta \\ \sin\theta & \cos\theta \end{pmatrix}, \quad \text{for } \theta\in (0,\pi)\cup (\pi,2\pi), \nn \\
N_2(e^{\theta\sqrt{-1}}, B) &= \begin{pmatrix} R(\theta) & B \\ 0 & R(\theta) \end{pmatrix}, \quad \text{for } \theta\in (0,\pi)\cup (\pi,2\pi) \text{ and} \nonumber \\
&\qquad B=\begin{pmatrix} b_1 & b_2 \\ b_3 & b_4 \end{pmatrix} \text{ with } b_j\in\R, \text{ and } b_2\neq b_3. \nn
\end{align}

For every $M\in\Sp(2n)$, the homotopy set $\Omega(M)$ of $M$ in $\Sp(2n)$ is defined by
$$ \Om(M)=\{N\in\Sp(2n)\,|\,\sg(N)\cap\U=\sg(M)\cap\U\equiv\Gamma\;\mbox{and}
\;\nu_{\om}(N)=\nu_{\om}(M),\ \forall\om\in\Gamma\}, $$
where $\sg(M)$ denotes the spectrum of $M$,  $\mathbb{U}$ denotes the unit circle in $\mathbb{C}$, and 
\[
\nu_{\omega}(M) \equiv \dim_{\mathbb{C}} \ker_{\mathbb{C}}(M - \omega I) \quad \text{for } \omega \in \mathbb{U}.
\]
We denote by $\Om^0(M)$ the path-connected component of $\Om(M)$ containing $M$.

In \cite[Theorems 1.2 and 1.3]{Long00}, Maslov-type index formulae are provided for iterations of symplectic paths (for the definition of the Maslov-type index, see, e.g., \cite{Long00} and \cite{Long02}), where the iteration is defined as concatenation. Concretely, the $k$-th iteration of a symplectic path $\Phi$ is defined by
\begin{equation*}
\Phi^{*k}(t) = \Phi(kt-j)\Phi(1)^j,
\quad \text{for } \frac{j}{k} \le t \le \frac{j+1}{k},\; j = 0,1,\dots,k-1.
\end{equation*}
In other words, $\Phi^{*k}$ is the concatenation
$
\Phi * (\Phi\Phi(1)) * \cdots * (\Phi\Phi(1)^{k-1}).
$
As observed in \cite[Remark~3.1]{Li25}, the pointwise matrix-power iteration $\Phi^k$ and the concatenation iteration $\Phi^{*k}$ are homotopic with fixed endpoints; namely,
\[
\Phi^k \sim \Phi * (\Phi\Phi(1)) * \cdots * (\Phi\Phi(1)^{k-1}).
\]
Therefore, they share the same mean index and the same Conley--Zehnder index when $ \Phi^k $ is non-degenerate.

Note that the Conley--Zehnder index coincides with the Maslov-type index for non-degenerate symplectic paths. Their relationship can be established through the Robbin--Salamon index.
We first recall that the Robbin--Salamon index $\mu_{\mathrm{RS}}$ is homotopy invariant and satisfies $\mu_{\mathrm{RS}}(\Phi) = \mu(\Phi)$ when $\Phi \in \TSpn(2n)$ (see \cite{RS1993}).
Following the notation of \cite{Long02}, for any path $\Phi \in \TSp(2n)$, we denote its Maslov-type index by the pair $(i(\Phi,1), \nu(\Phi,1))$, where $\nu(\Phi,1) = 0$ if $\Phi \in \TSpn(2n)$. According to \cite[Theorem 3.1 and Corollary 2.1]{LZ2000}, for a non-degenerate path $\Phi \in \TSpn(2n)$, the following identity holds:
\begin{equation*}
\begin{split}
\mu_{\mathrm{RS}}(\Phi) &= \mu_{\mathrm{RS}}\big(\operatorname{Gr}(\Phi),\operatorname{Gr}(I_{2n})\big) \\
&=\mu_{\mathrm{CLM}}\big(\operatorname{Gr}(I_{2n}),\operatorname{Gr}(\Phi)\big)+\tfrac{1}{2}\big(\nu_1(\Phi(1))-\nu_1(\Phi(0))\big)\\
&=\mu_{\mathrm{CLM}}\big(\operatorname{Gr}(I_{2n}),\operatorname{Gr}(\Phi)\big) + \tfrac{1}{2}\big(\nu(\Phi,1)-2n\big) \\
&= i(\Phi,1).
\end{split}
\end{equation*}
Here, $\mu_{\mathrm{CLM}}$ denotes the Cappell--Lee--Miller index \cite{CLM}, and $\operatorname{Gr}(M)$ is the graph of a symplectic matrix $M \in \Sp(2n)$, regarded as a Lagrangian subspace of $(\mathbb R^{2n}\oplus\mathbb R^{2n},-\omega_0\oplus\omega_0)$. 

The fixed-endpoint homotopy between the two iterations and the equality of the two indices for non-degenerate symplectic paths allow us to apply \cite[Theorems~1.2 and~1.3]{Long00} to $\Phi^k$. This gives the following normal form decomposition and Conley--Zehnder index iteration formula for strongly non-degenerate symplectic paths.

\begin{theorem}\label{indexformula}
	Let $\Phi \in \TSpn(2n)$ with $P = \Phi(1)$, and suppose that $\Phi^k \in \TSpn(2n)$ for all $k \in \mathbb{N}$. Then there exists a continuous path $f:[0,1]\to\Omega^0(P)$ with $f(0)=P$ and
	\begin{equation*}
	\begin{split}
	f(1) &= R(\theta_1) \diamond \cdots \diamond R(\theta_r) \diamond N_2\big(e^{\aa_{1}\sqrt{-1}},B_{1}\big) \diamond \cdots \diamond N_2\big(e^{\aa_{r_{\ast}}\sqrt{-1}},B_{r_{\ast}}\big) \diamond M_h,
	\end{split}
	\end{equation*}
	where $\frac{\theta_{j}}{2\pi}\in(0,1)\setminus\mathbb{Q}$ for $1\le j\le r$, $\frac{\alpha_{j}}{2\pi}\in(0,1)\setminus\mathbb{Q}$ for $1\le j\le r_{\ast}$, and the nonnegative integers $r$, $r_\ast$, $h$ satisfy
	\begin{equation*}
	r + 2r_{\ast} + h = n.
	\end{equation*}
	For $h\geq1$, the hyperbolic block is
	$M_h=D(2)^{\diamond h}$ or
	$M_h=D(-2)\diamond D(2)^{\diamond(h-1)}$;
	when $h=0$, this block is omitted.
	Moreover, for every $k\in\mathbb N$, the Conley--Zehnder index of the matrix-power iterate satisfies
	\begin{equation}\label{eq:CZ_iteration}
	\cz(\Phi^k) = k\big(\cz(\Phi)-r\big) + 2\sum_{j=1}^r\left\lfloor\frac{k\theta_j}{2\pi}\right\rfloor + r.
	\end{equation}
\end{theorem}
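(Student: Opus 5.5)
The plan is to obtain Theorem~\ref{indexformula} from Long's normal form \cite{Long99} and his iteration formulae \cite[Theorems~1.2 and~1.3]{Long00}, translated into the Conley--Zehnder convention. The translation uses two facts recalled above: $\Phi^k$ and $\Phi^{*k}$ are homotopic with fixed endpoints, and $i(\Phi^k,1)=\mu_{\mathrm{RS}}(\Phi^k)=\mu(\Phi^k)$ whenever $\Phi^k$ is non-degenerate. Both indices are homotopy invariant with fixed endpoints on non-degenerate paths. So it suffices to choose a good endpoint normal form reachable inside $\Omega^0(P)$ and evaluate Long's formula block by block.

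\emph{Step 1: restricting the spectrum.} Strong non-degeneracy says that no eigenvalue of $P$ is a root of unity. In particular $P$ has no eigenvalue $1$, since $\Phi$ is non-degenerate, and no eigenvalue $-1$, since $\Phi^2$ is non-degenerate. So every eigenvalue of $P$ on $\U$ is $e^{\pm\sqrt{-1}\theta}$ with $\theta/2\pi\notin\mathbb Q$.

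\emph{Step 2: the normal form.} Long's basic normal form theorem then supplies a path in $\Omega^0(P)$ from $P$ to a $\diamond$-product of three kinds of blocks:
\begin{itemize}
\item rotations $R(\theta_j)$;
\item non-trivial two-by-two Jordan-type blocks $N_2(e^{\alpha_j\sqrt{-1}},B_j)$;
\item a hyperbolic part with no eigenvalues on $\U$.
\end{itemize}
Trivial $N_2$ blocks (those with $b_2=b_3$) split into two $R$'s, so they can be absorbed into the rotation part. The hyperbolic part consists of real-eigenvalue blocks $D(\lambda)$, possibly with Jordan structure, and complex quadruples off $\U$. Any such part can be deformed through hyperbolic matrices, which stays within $\Omega^0$ because the unit-circle spectrum does not change, to $D(\pm2)$ blocks.

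The hyperbolic set $\Sp(2h)\cap\{\sigma\cap\U=\emptyset\}$ has exactly two components, distinguished by the sign of $\det(M_h-I)$, that is, by the parity of the number of negative real eigenvalue pairs. Hence two $D(-2)$ blocks can be rotated into two $D(2)$ blocks. This leaves $M_h=D(2)^{\diamond h}$ or $D(-2)\diamond D(2)^{\diamond(h-1)}$. Counting dimensions gives $r+2r_*+h=n$.

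\emph{Step 3: the iteration formula.} We insert this normal form into \cite[Theorem~1.3]{Long00}, using the splitting numbers:
\begin{itemize}
\item Each hyperbolic block and each non-trivial $N_2$ block with irrational angle has zero splitting-number contributions at every iterate. Their contribution to $i(\Phi^k,1)$ is therefore exactly linear in $k$.
\item Each $R(\theta_j)$ contributes $2\lfloor k\theta_j/2\pi\rfloor+1$ on top of a linear term. Here we use that $k\theta_j/2\pi$ is never an integer, so $E(\cdot)-1=\lfloor\cdot\rfloor$.
\end{itemize}
Summing these gives
\[
\mu(\Phi^k)=k\,a+2\sum_j\lfloor k\theta_j/2\pi\rfloor+r
\]
for some constant $a$. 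Setting $k=1$, where every floor term vanishes because $\theta_j\in(0,2\pi)$, forces $a=\mu(\Phi)-r$, which is \eqref{eq:CZ_iteration}.

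\emph{Main obstacle.} The hardest part is not conceptual but the careful matching of conventions. One must check that Long's splitting-number formula, written for $\Phi^{*k}$ in the Maslov-type normalization, yields exactly the constant $+r$ and the floor terms once converted to $\mu$. One must also check that the $R(\theta)$ with $\theta\in(\pi,2\pi)$ (the opposite Krein sign) and the $N_2$ blocks fit the same pattern. I would verify this first on the model paths $R(t\theta)$ and $D(\pm2)$, where both sides can be computed directly, and then invoke additivity of $\mu$ under $\diamond$.
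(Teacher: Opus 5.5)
Your proposal is correct and is essentially the paper's argument. The paper likewise obtains Theorem~\ref{indexformula} directly from \cite[Theorems~1.2 and~1.3]{Long00}, transferring it to $\Phi^k$ via the fixed-endpoint homotopy $\Phi^k\sim\Phi^{*k}$ and the identity $i(\cdot,1)=\mu_{\mathrm{RS}}=\mu$ on non-degenerate paths; your Steps~1--3 simply spell out what that citation contains.

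One small correction to Step~2. In Long's terminology a \emph{trivial} block $N_2(\omega,b)$ is one with $(b_2-b_3)\sin\theta>0$; blocks with $b_2=b_3$ are excluded by definition. Such a block has $\nu_\omega=1$, so it cannot be split into two $R$'s inside $\Omega^0(P)$. It should instead be kept among the $N_2$ factors, which the statement allows.

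This does not affect \eqref{eq:CZ_iteration}, because for irrational angles every $N_2$ block contributes only linearly in $k$. A trivial block has splitting numbers $(0,0)$. A non-trivial block has splitting numbers $(1,1)$ at $\omega$ and $\bar\omega$, so it is not literally ``zero'' as you state. Its ceiling terms add up to $2\lceil x\rceil+2\lceil k-x\rceil=2k+2$, which exactly offsets its $-2k-2$ correction.
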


By \cite[Theorems 8.1.6--8.1.7 and Theorems 8.2.1--8.2.4]{Long02}, we have the following result on the parity of the Conley--Zehnder index.

\begin{proposition}\label{index:odd-even}
	Let $\Phi \in \TSpn(2)$ be a symplectic path.
	\begin{enumerate}
		\item If the end matrix $\Phi(1)$ is homotopic within $\Sp^*(2)$ to $R(\theta)$ for some $\theta \in (0,\pi) \cup (\pi,2\pi)$, or to $D(-2)$, then the Conley--Zehnder index $\cz(\Phi)$ is odd.
		\item If the end matrix $\Phi(1)$ is homotopic within $\Sp^*(2)$ to $D(2)$, then the Conley--Zehnder index $\cz(\Phi)$ is even.
	\end{enumerate}
	
	Let $\psi \in \TSpn(4)$ be a symplectic path. If the end matrix $\psi(1)$ is homotopic within $\Sp^*(4)$ to $N_2(e^{\alpha\sqrt{-1}},B)$ for some $\alpha \in (0,\pi) \cup (\pi,2\pi)$, then the Conley--Zehnder index $\cz(\psi)$ is even.
\end{proposition}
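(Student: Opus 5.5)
The plan is to derive all three parity statements from a single sign rule:
\[
(-1)^{\cz(\Phi)-n}=\operatorname{sign}\det\big(\Phi(1)-I\big)\qquad\text{for all }\Phi\in\TSpn(2n).
\]
The statements then follow by evaluating determinants. For $R(\theta)$ one has $\det(R(\theta)-I)=2-2\cos\theta>0$, and for $D(-2)$ one has $\det(D(-2)-I)=(-3)(-3/2)>0$. With $n=1$ the sign rule gives odd index in both cases. For $D(2)$ one has $\det(D(2)-I)=(1)(-1/2)<0$, which gives even index. For $N_2(e^{\alpha\sqrt{-1}},B)$ the matrix is block upper triangular, so $\det(N_2-I)=\det(R(\alpha)-I)^2>0$. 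With $n=2$ this gives even index. The hypotheses are stated up to homotopy within $\Sp^*(2)$ or $\Sp^*(4)$, and the sign of $\det(P-I)$ is constant along such a homotopy, so these computations cover the hypotheses as stated.

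To prove the sign rule, I would show that both sides are invariant under the same moves and then check them on representatives.

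First, $\cz$ is locally constant on $\TSpn(2n)$ (Section~\ref{sec:index_paths}). Also, $\operatorname{sign}\det(P-I)$ is continuous and nowhere zero on $\Sp^*(2n)$, so it is locally constant there. Hence both sides are constant on connected components of $\TSpn(2n)$.

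Second, two lifts of the same endpoint differ by a loop in $\Sp(2n)$. Concatenating with a loop of Maslov index $m$ changes $\cz$ by $2m$, as in the change-of-trivialization formula of Section~\ref{sec:index_orbit}. So the parity of $\cz$ depends only on the component of $\Phi(1)$ in $\Sp^*(2n)$.

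Third, I would use the classical fact that $\Sp^*(2n)$ has exactly two path components, distinguished by the sign of $\det(P-I)$. It then suffices to verify the rule on one representative path in each component.

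Both sides of the rule are multiplicative under $\diamond$: the index is additive, and determinants multiply. So it is enough to treat $n=1$ and then take direct sums.

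For $n=1$ I would check two model paths.
\begin{itemize}
\item The rotation path $t\mapsto R(\theta t)$ with small $\theta>0$. Following the definition in Section~\ref{sec:index_paths}, extend it inside $\Sp^*(2)$ by rotating on to $-I$. The concatenation is the rotation from $0$ to $\pi$, whose mean index is $1$ by the normalization $\hmu(\Phi_0)=2$. So $\cz=1$, which is odd, matching $\det>0$.
\item The hyperbolic path $t\mapsto D(2^t)$. This is the standard example with $\cz=0$, which is even, matching $\det<0$. Its mean index is $0$, and since the endpoint has no elliptic part, no further extension is needed.
\end{itemize}

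For general $n$, take $\diamond$-sums of these model paths. For example, $D(2)\diamond R(\theta)^{\diamond(n-1)}$ represents the negative component and $R(\theta)^{\diamond n}$ the positive one.

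The main obstacle is the third step, the claim that $\Sp^*(2n)$ has exactly two components. This is standard and could be cited from \cite{Long02} or \cite{SZ92}. I would otherwise avoid it altogether, because Long's normal form in \cite{Long02} directly connects any $P\in\Sp^*(2n)$ within $\Sp^*(2n)$ to a $\diamond$-product of the blocks above. It is also the place where the cited theorems of \cite{Long02} already give the stated result, so the most economical write-up may simply invoke them.
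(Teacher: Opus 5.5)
Your proof is correct, but it takes a different route from the paper. The paper gives no argument of its own: it reads the parities off Long's explicit computations \cite[Theorems 8.1.6--8.1.7 and 8.2.1--8.2.4]{Long02}. Those theorems determine the index, and indeed the whole iteration sequence, of paths ending at each basic normal form $R(\theta)$, $D(\pm 2)$ and $N_2(e^{\alpha\sqrt{-1}},B)$, one case at a time.

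You instead prove a single dimension-independent rule, $(-1)^{\cz(\Phi)-n}=\operatorname{sign}\det(\Phi(1)-I)$. It rests on four ingredients:
\begin{itemize}
\item local constancy of $\cz$;
\item the shift by $2m$ under loops of Maslov index $m$;
\item the fact that $\Sp^*(2n)$ has exactly two components;
\item two model paths, whose indices you compute correctly in the paper's normalization $\hmu(\Phi_0)=2$: the rotation to $-I$ has $\cz=1$, and the path $D(2^t)$ has $\cz=0$.
\end{itemize}
All three claims then reduce to the sign of a determinant. In particular, the $N_2$ case follows from block-triangularity, $\det(N_2-I)=\det(R(\alpha)-I)^2>0$, without analyzing the non-diagonalizable block itself.

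Your approach is more self-contained and more general. It covers every non-degenerate endpoint in every dimension and uses nothing beyond the classical two-component structure of $\Sp^*(2n)$. The paper's citation is more economical in context, because Long's theory is already invoked for Theorem~\ref{indexformula} and supplies full iteration formulas rather than just parities. Your sign rule is itself a standard property of the Conley--Zehnder index, so in a write-up you could simply cite it instead of reproving it.
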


\subsection{Index Recurrence}

A crucial ingredient for distinguishing simple and iterated orbits in the proof of Theorem~\ref{thm:main} is the following combinatorial result on index iteration behavior. We state this as the Index Recurrence Theorem for the non-degenerate case, which is essentially \cite[Theorem 5.1]{GG20} with a change in notation.

This result represents the non-degenerate specialization of the Generalized Common Index Jump Theorem \cite[Theorem 3.6]{DLLW24}, which generalizes earlier work by Long and Zhu \cite[Theorem 4.3]{LZ02} and its enhancement \cite[Theorem 3.5]{DLW16}. According to the discussion in \cite{Duan25}, the Generalized Common Index Jump Theorem is consistent with the Index Recurrence Theorem in the degenerate case \cite[Theorem 5.2]{GG20}; we focus here solely on the non-degenerate setting.

\begin{theorem}\label{indexrecurrence}
	Let $\Phi_1,\ldots,\Phi_r$ be a finite collection of strongly non-degenerate elements of $ \widetilde{\Sp}(2n) $ such that $\hmu(\Phi_i)\neq 0$ for all $1\le i\le r$. Then for any $\eta>0$ and any $\ell_0\in\N$, there exist an integer sequence $d_j\to +\infty$ and a sequence of integer vectors $\vk_j=\left(k_{1j},\ldots,k_{rj}\right)$ with all components going to infinity as $j\to +\infty$, such that for all $i$, $j$, and for every $\ell\in\Z$ with $1\leq |\ell|\leq \ell_0$, the following hold:
	\begin{enumerate}
		\item[\textnormal{\reflb{cond:i}{(i)}}] $\big|\hmu\big(\Phi^{k_{ij}}_i\big)-\rho_{i}d_j\big|<\eta$,
		\item[\textnormal{\reflb{cond:ii}{(ii)}}] $\cz\big(\Phi^{k_{ij}+\ell}_i\big)= \rho_{i}d_j + \cz(\Phi^\ell_i)$,
	\end{enumerate}
	where
	\[
	\rho_{i}=
	\begin{cases}
	1, & \text{if }\hmu(\Phi_i)> 0, \\
	-1, & \text{if }\hmu(\Phi_i)< 0.
	\end{cases}
	\]
	Furthermore, for any $N\in \N$ we can make all $d_j$ and $k_{ij}$ divisible by $N$.
\end{theorem}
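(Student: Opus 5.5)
The plan is to derive the statement directly from the iteration formula of Theorem~\ref{indexformula}, combined with simultaneous Diophantine approximation, following the strategy of \cite[Theorem~5.1]{GG20}.

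\textbf{Step 1: linearize the index and the mean index.} For each $i$, Theorem~\ref{indexformula} gives irrational angles $\theta_{i1},\dots,\theta_{ir_i}$ such that
\[
\cz(\Phi_i^k)=k\big(\cz(\Phi_i)-r_i\big)+2\sum_{s=1}^{r_i}\Big\lfloor \tfrac{k\theta_{is}}{2\pi}\Big\rfloor+r_i .
\]
Dividing by $k$ and letting $k\to\infty$ (using \eqref{eq:range}) yields
\[
\hmu(\Phi_i)=\cz(\Phi_i)-r_i+\sum_s \theta_{is}/\pi .
\]
The hyperbolic blocks and the $N_2$ blocks contribute only linearly in $k$, and are already absorbed in the term $k(\cz(\Phi_i)-r_i)$. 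For negative $\ell$, interpret $\Phi_i^{\ell}$ as the iterate of $\Phi_i^{-1}$. Then $\cz(\Phi_i^{\ell})=-\cz(\Phi_i^{-\ell})$, and the same floor formula continues to hold for all $\ell\in\Z\setminus\{0\}$. This should be checked once, but it follows from $\lfloor -x\rfloor=-\lfloor x\rfloor-1$ for $x\notin\Z$.

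\textbf{Step 2: choose $d$ and $k_i$ by Dirichlet's theorem.} Set
\[
\varepsilon_0=\min_{i,s}\ \min_{1\le|\ell|\le\ell_0}\operatorname{dist}\big(\ell\theta_{is}/2\pi,\Z\big)>0,
\]
which is positive because the $\theta_{is}/2\pi$ are irrational. Consider the real vector
\[
v=\Big(\tfrac{1}{|\hmu(\Phi_i)|},\ \tfrac{\theta_{is}}{2\pi|\hmu(\Phi_i)|}\Big)_{i,s}.
\]
Apply Dirichlet's simultaneous approximation theorem to $Nv$. This produces infinitely many $d\in N\N$ with $\|dv\|_{\R/\Z}<\varepsilon$, where $\varepsilon$ is small compared with $\varepsilon_0$, with $\eta$, and with $1/\max_{i,s}\theta_{is}$. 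Let $k_i$ be the nearest integer to $d/|\hmu(\Phi_i)|$, so that $k_i=d/|\hmu(\Phi_i)|+e_i$ with $|e_i|<\varepsilon$.

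If divisibility of the $k_i$ by $N$ is also required, approximate instead the vector $v'$, which adds the components $1/(N|\hmu(\Phi_i)|)$, and take $k_i=N\cdot$(nearest integer to $d/(N|\hmu(\Phi_i)|)$).

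With these choices:
\[
k_i\hmu(\Phi_i)=\rho_i d+e_i\hmu(\Phi_i),\qquad
\frac{k_i\theta_{is}}{2\pi}=\frac{d\theta_{is}}{2\pi|\hmu(\Phi_i)|}+\frac{e_i\theta_{is}}{2\pi}=N_{is}+\delta_{is},
\]
where $N_{is}\in\Z$ and $|\delta_{is}|<2\varepsilon$. Property \ref{cond:i} follows at once. The sequences $d_j\to\infty$ and $k_{ij}\to\infty$ are obtained by letting $\varepsilon\to 0$, or simply by taking the infinitely many admissible $d$.

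\textbf{Step 3: verify the exact index identity \ref{cond:ii}.} For $1\le|\ell|\le\ell_0$, the bound $|\delta_{is}|<\varepsilon_0$ implies
\[
\Big\lfloor N_{is}+\delta_{is}+\tfrac{\ell\theta_{is}}{2\pi}\Big\rfloor=N_{is}+\Big\lfloor\tfrac{\ell\theta_{is}}{2\pi}\Big\rfloor .
\]
Substituting into the floor formula gives
\[
\cz(\Phi_i^{k_i+\ell})=A_i+\cz(\Phi_i^{\ell}),\qquad A_i:=k_i(\cz(\Phi_i)-r_i)+2\sum_s N_{is}\in\Z .
\]
By Step 1,
\[
A_i=k_i\hmu(\Phi_i)-2\sum_s\delta_{is}=\rho_i d+O(\varepsilon).
\]
Since both $A_i$ and $\rho_i d$ are integers and $\varepsilon$ is small, they coincide, which gives \ref{cond:ii}.

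\textbf{Main obstacle.} The delicate point is organizing the Diophantine approximation so that a single $d$ works simultaneously for all $i$. This is exactly why one approximates the normalized vector $v$ rather than the angles themselves. A second point needing care is the treatment of negative $\ell$ and the sign $\rho_i$ when $\hmu(\Phi_i)<0$. Here the rounding of $k_i$ must keep $k_i\hmu(\Phi_i)$ close to $-d$, and the floor identity must still hold on the correct side of each integer; the symmetric bound $|\delta_{is}|<\varepsilon_0$ handles both signs of $\ell$ uniformly.
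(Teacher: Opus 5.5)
Your proof is correct. The paper itself gives no proof of this statement. It imports it from \cite[Theorem~5.1]{GG20} and from the non-degenerate case of the Generalized Common Index Jump Theorem \cite[Theorem~3.6]{DLLW24}, which covers the signed version with $\rho_i$.

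Your route is a self-contained derivation from Theorem~\ref{indexformula}, in three steps:
\begin{itemize}
\item You extend the floor formula to negative $\ell$, using $\lfloor -x\rfloor=-\lfloor x\rfloor-1$ together with $\mu(\Phi^{-1})=-\mu(\Phi)$.
\item You apply simultaneous Dirichlet approximation to the vector $\bigl(1/|\hmu(\Phi_i)|,\ \theta_{is}/(2\pi|\hmu(\Phi_i)|)\bigr)$. This produces one common $d$ for all $i$.
\item You upgrade the approximate identity $A_i\approx\rho_i d$ to an exact one, since both sides are integers.
\end{itemize}

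This is the same mechanism that underlies the cited results: an explicit iteration formula combined with Diophantine approximation. What your version buys is transparency. It shows that the sign $\rho_i$ costs nothing, because you simply round $d/|\hmu(\Phi_i)|$. The paper instead relies on \cite{DLLW24} for the signed case, whose Long--Zhu-type machinery is built to handle degenerate paths. The citation is shorter, but your argument makes the non-degenerate statement, including the divisibility clause, fully checkable from results already stated in the paper.

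Two bookkeeping remarks:
\begin{itemize}
\item In your variant where the $k_i$ must also be divisible by $N$, you get $|e_i|<N\varepsilon$ and $|\delta_{is}|<(N+1)\varepsilon$, not $\varepsilon$ and $2\varepsilon$. So $\varepsilon$ must also be chosen small relative to $N$. This is harmless, since $N$ is fixed first.
\item Your requirement that $\varepsilon$ be small compared with $1/\max\theta_{is}$ is unnecessary, because $\theta_{is}/2\pi<1$ already gives $|e_i\theta_{is}/2\pi|<|e_i|$.
\end{itemize}
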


\section{Proof of Theorem~\ref{thm:main}}
\label{sec:proof-main}

We prove Theorem~\ref{thm:main'} below and first explain how the same argument applies under the hypotheses of Theorem~\ref{thm:main}.

Under condition~\ref{cond:Fe} of Theorem~\ref{thm:main}, we use positive $S^1$-equivariant symplectic homology with coefficients in the Novikov field, as in \cite[Section~2 and Proposition~2.1(a)]{AbM24}. The formulas \eqref{eq:CH} and \eqref{eq:CH-nm} then hold with $\mathbb Q$ replaced by this field; see also Remark~\ref{rm:negative-grading}. All chain and homology ranks in this case are taken over the Novikov field.

Under condition~\ref{cond:NFe} of Theorem~\ref{thm:main},
$c_1(\xi)$ is torsion, and $(B,\omega)$ is monotone;
see \cite[Remark~1.11]{AbM24}.
Using the trivializations from the proof of
\cite[Proposition~2.1]{AbM24}, we apply the arguments of
\cite[Lemma~3.3 and Section~6.2]{GGM18} to obtain a sufficiently
small non-degenerate perturbation $\beta$ of the connection
form such that every contractible orbit $\gamma$ of $\beta$
satisfies
\begin{equation}\label{eq:beta-indices}
\begin{cases}
\mu(\gamma)\geq 2c_B-n, & \text{if $B$ is spherically positive monotone},\\
\mu(\gamma)\leq n-2c_B, & \text{if $B$ is spherically negative monotone}.
\end{cases}
\end{equation}
Since $c_B\geq2$ in the positive case and $c_B\geq n$ in
the negative case, these estimates exclude the indices
$1-n$, $2-n$, and $3-n$. Thus $\beta$ is index-admissible.

Following the construction in \cite[Section~6.1]{GGM18},
we use $\beta$ to define the linearized contact homology
of $\alpha$; see also \cite{Par19,Cha25}.
This construction does not require $\alpha$ itself to be
index-admissible. The augmentation obtained from $\beta$
vanishes on non-contractible orbits, so the resulting complex
splits according to free homotopy classes. We take the summand generated
by the good contractible closed Reeb orbits of $\alpha$
and grade it by the Conley--Zehnder index, placing each
generator $\gamma$ in degree $\mu(\gamma)$.
Its homology is given by \eqref{eq:CH} or \eqref{eq:CH-nm};
see \cite[Section~6.2]{GGM18} and
Remark~\ref{rm:negative-grading}.
The relations \eqref{eq:resonance}--\eqref{eq:Morse_inequalities2}
apply when $\alpha$ is index-nonzero and has finitely many
simple contractible orbits, as in Section~\ref{sec:lesh}.
Hence the counting argument below also proves
Theorem~\ref{thm:main} under condition~\ref{cond:NFe}.

\begin{theorem}\label{thm:main'}
Let $(M^{2n+1},\xi)$, where $n$ is even, be a prequantization
$S^1$-bundle of a closed symplectic manifold $(B,\omega)$
such that $\omega|_{\pi_2(B)}\neq 0$ and $c_B>n/2$.
Assume that either $\H_k(B;\mathbb Q)=0$ for every odd $k$, or $c_B>n$. Let $\alpha$ be a non-degenerate contact form supporting~$\xi$ that has no contractible good periodic Reeb orbits $\gamma$ with Conley--Zehnder index $\mu(\gamma)=0$. Assume that one of the following conditions holds:
	\begin{itemize}
		\item[\reflb{cond:F1}{(i)}] $M$ admits a strong symplectic filling $(W,\Omega)$ such that $\Omega|_{\pi_2(W)}=0$ and $c_1(TW)|_{\pi_2(W)}=0$, and the inclusion-induced map $\pi_1(M) \to \pi_1(W)$ is injective. Furthermore, $\alpha$ is index-nonzero.
		\item[\reflb{cond:NF1}{(ii)}] $c_1(\xi)=0$, $B$ is spherically positive monotone, and $\alpha$ is index-admissible and index-positive.
	\end{itemize}
	Then $\alpha$ carries at least $\bar{r}_B$ geometrically distinct contractible periodic orbits, where
	\[
	\bar{r}_B:=\frac{1}{2}\chi(B) + \frac{1}{2}\dim \H_{n}(B;\Q)+1.
	\]
\end{theorem}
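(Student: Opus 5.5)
Assume $\alpha$ has only finitely many geometrically distinct contractible closed orbits; otherwise there is nothing to prove. Let $\gamma_1,\dots,\gamma_r$ be the simple contractible orbits. In case \ref{cond:F1} they are index-nonzero by hypothesis; in case \ref{cond:NF1} they are index-positive. The plan is to compare the homology \eqref{eq:CH} (or \eqref{eq:CH-nm}) in a window of degrees near a large common index level with the good orbits lying in that window, using the Index Recurrence Theorem~\ref{indexrecurrence} and the Morse inequalities \eqref{eq:Morse_inequalities1}--\eqref{eq:Morse_inequalities2}.

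\textbf{Step 1: the target number.} Since $n$ is even, Poincar\'e duality gives $\chi(B)=2\sum_{k<n}(-1)^k b_k(B)+b_n(B)$. Hence
\[
\bar r_B-1=\sum_{k=0}^{n}(-1)^{n-k}\dim\H_k(B;\Q),
\]
which is exactly the alternating sum of the homology of $B$ in one block of \eqref{eq:CH}.

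\textbf{Step 2: choosing the index level.} Apply Theorem~\ref{indexrecurrence} to the linearized flows $\Phi_i$ of the $\gamma_i$ with small $\eta$ and some fixed $\ell_0$. Use the divisibility clause to arrange $d_j\equiv 2mc_B\pmod{\text{large}}$, so that $d:=d_j=2mc_B$ for some $m$ (in the positive monotone case).

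\textbf{Step 3: homology in the window.} In the degree window $[d-n,d]$, \eqref{eq:CH} contributes $\H_{*-d+n}(B)$ in degrees $0,\dots,n$ of $B$. The condition $c_B>n/2$, together with vanishing odd homology or $c_B>n$, controls overlap with the neighbouring blocks $m\pm1$. Since the window endpoints have the same parity, \eqref{eq:Morse_inequalities2} gives
\[
c_{d-n}-c_{d-n+1}+\dots+c_d\ \ge\ \bar r_B-1 .
\]
Possibly the window should be $[d-n,d+n]$ instead, with the lower half or overlap terms handled separately.

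\textbf{Step 4: at most one contribution per orbit.} Show that each $\gamma_i$ contributes at most one good iterate with index in the window. For $\rho_i=-1$, all iterates have indices near $-d$ or are bounded, so they lie far from the window. For $\rho_i=1$, part~\ref{cond:ii} gives $\mu(\gamma_i^{k_{ij}\pm\ell})=d+\mu(\gamma_i^{\pm\ell})$. Homogeneity and \eqref{eq:range} push iterates with $|k-k_{ij}|>\ell_0$ out of the window. The iterate $\gamma_i^{k_{ij}}$ itself has index in $(d-n,d+n)$ by part~\ref{cond:i} and \eqref{eq:range}. Two contributions from the same $\gamma_i$ could only come from $\ell=\pm1$ shifts with $\mu(\gamma_i^{\pm1})$ small. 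Those of index $0$ are excluded by hypothesis, and parity (Proposition~\ref{index:odd-even}, \eqref{same parity}) shows the relevant shifted iterate is bad or lands outside. This yields $r\ge c_{d-n}+\dots+c_d\ge\bar r_B-1$ in its crude form, after discarding negative terms.

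\textbf{Step 5: the extra orbit, the main obstacle.} I expect the hardest step to be obtaining the final $+1$ and making Step~4 rigorous when good orbits of index $\pm1$ are allowed. For the extra orbit I would try two sources, first the degree-$d$ boundary and then the resonance relation.

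The first source is a second recurrence level $d'=d+2c_B$, or the symmetric window $[d,d+n]$ next to $[d-n,d]$. An orbit whose $k_{ij}$-iterate is counted in one window cannot also fill the top class in degree $d+n$ of the next block. This is because the fundamental class $\H_{2n}(B)\neq0$ forces an orbit of index exactly $d+n$. By \eqref{eq:range} and \ref{cond:i}, such an orbit must be $\gamma_i^{k_{ij}}$ with $\mu(\gamma_i)$ extremal, and this should produce an orbit counted separately.

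If this fails, the second source is \eqref{eq:resonance}. If exactly $\bar r_B-1$ orbits existed, each index-positive good $\gamma_i$ would have $\hat\chi(\gamma_i)/\hat\mu(\gamma_i)$ determined. Summing these would contradict the value of $\chi_+(M)$ computed from \eqref{eq:CH}, namely $\chi_+(M)=\chi(B)/(2c_B)$. The mismatch between $\chi(B)/2$ and $\bar r_B-1$ is what should be exploited here.
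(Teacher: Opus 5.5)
There is a genuine gap, located in Step~4, and the rest of the argument depends on it. The claim that each $\gamma_i$ has at most one good iterate with index in $[d-n,d]$ does not follow from Theorem~\ref{indexrecurrence} and parity, and it fails in general. For $\rho_i=1$, part~\ref{cond:ii} gives $\mu(\gamma_i^{k_i-\ell})=d-\mu(\gamma_i^{\ell})$ for \emph{every} $1\le\ell\le\ell_0$. So each good $\gamma_i^\ell$ with $0\le\mu(\gamma_i^\ell)\le n$ places a good iterate in your window, and likewise $\gamma_i^{k_i+\ell}$ whenever $-n\le\mu(\gamma_i^\ell)\le 0$. Nothing restricts this to $\ell=\pm1$.

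For example, take $n=2$ and linearized flow $R(t\theta_1)\diamond R(t\theta_2)$ with $\theta_j/2\pi\in(0,1/10)\setminus\mathbb{Q}$. Formula~\eqref{eq:CZ_iteration} gives $\mu(\gamma^\ell)=2$, with $\gamma^\ell$ good, for $\ell=1,\dots,9$. With $\ell_0\ge 9$, which your push-out claim needs anyway, nine good iterates of one simple orbit lie in degree $d-2$, each with sign $+1$ in the alternating sum. Hence neither $c_{d-n}+\dots+c_d$ nor $\sum_{m=d-n}^{d}(-1)^m c_m$ is bounded by $r$, and the local Morse inequality on $[d-n,d]$ gives no lower bound on $r$.

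Step~5 does not supply a mechanism either. An orbit of index $d+n$ could be $\gamma_i^{k_i+\ell}$ with $\mu(\gamma_i^\ell)=n$ rather than $\gamma_i^{k_i}$. Moreover, \eqref{eq:resonance} fixes only the weighted sum $\sum\hat\chi(\gamma_i)/|\hat\mu(\gamma_i)|$, not the individual ratios.

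What you are missing is a global count that controls all iterates at once. The paper takes $N$ a multiple of $2c_B$ and $\eta$ small, and uses $\chi_-(M)=0$ (this is where $c_B>n/2$ enters). This yields the truncated resonance identity $\sum_i\sum_{k=1}^{k_i}\chi(\gamma_i^k)=\sum_i k_i\hat\chi(\gamma_i)=s\chi(B)$ with $d=2sc_B$.

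It then works on the symmetric window $[-d+1,d-1]$. Iterates with $k<k_i-\ell_0$ lie inside it, and those with $k>k_i+\ell_0$ lie outside. For $1\le\ell\le\ell_0$, the recurrence shows that $\gamma_i^{k_i-\ell}$ leaves the window exactly when $\gamma_i^{k_i+\ell}$ enters it. Since $k_i$ is even, these two iterates have the same goodness and index parity, so they cancel. The exclusion of good orbits of index $0$ is exactly what prevents both from landing on $\pm d$ without compensation.

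What remains is $\sum_{m=-d+1}^{d-1}(-1)^m c_m=s\chi(B)-(r_+^e-r_+^o)$. Only the single iterates $\gamma_i^{k_i}$ lying outside the window are counted here, which is the correct form of ``one contribution per orbit''. Comparing with $\sum_{m=-d+1}^{d-1}(-1)^m b_m=s\chi(B)-\sum_{m=0}^{n}(-1)^m\dim\H_{m+n}(B;\Q)$ gives $r\ge r_+^e-r_+^o\ge\bar r_B-1$.

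The $+1$ then comes from the equality case. There $r=r_+^e$ and $r_+^o=0$, which forces every $\gamma_i^{k_i}$, and hence every good contractible orbit, to have even index. Then $\HC^0_{\mathrm{odd}}(M)=0$, so the odd rational homology of $B$ vanishes, and no good orbit has index in $\{-1,0,1\}$. Theorem~\ref{prop:previous-prequantization} now gives $r\ge\chi(B)$, which exceeds $\bar r_B-1$ by $\sum_{j<n}\dim\H_j(B;\Q)\ge1$, a contradiction.
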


Let $(M,\xi)$ and $\alpha$ be as in Theorem~\ref{thm:main'}.
We begin the proof by assuming that $\alpha$ admits finitely many distinct simple contractible closed orbits $\gamma_1, \dots, \gamma_r$. If condition~\ref{cond:F1} in Theorem~\ref{thm:main'} is satisfied, then $B$ is spherically monotone by Proposition~\ref{prop:CH};
if  condition~\ref{cond:NF1} is satisfied, then $B$ is spherically positive monotone. We will focus on the case where $B$ is spherically positive monotone. When $B$ is spherically negative monotone, let $C_*$ be the complex generated by good contractible orbits and set $D_k=\operatorname{Hom}(C_{-k},\mathbb Q)$, with differential dual to $\partial:C_{-k+1}\to C_{-k}$. By \eqref{eq:range} and the finiteness of $\{\gamma_1,\ldots,\gamma_r\}$, the nonzero mean indices imply that each chain group is finite-dimensional. Formula~\eqref{eq:CH-nm} and Poincar\'e duality give
\[
\dim \H_k(D)=\sum_{m\geq1}\dim \H_{k-2mc_B+n}(B;\mathbb Q).
\]
Apply the argument below to $D_*$ and the inverse symplectic paths: the indices change sign, while good iterates remain good and the assumption $\mu(\gamma)\neq0$ for good contractible orbits is preserved. The same reasoning applies over the Novikov field in Theorem~\ref{thm:main}.

In the spherically positive monotone case, there is at least
one simple contractible closed orbit with positive mean index.
Indeed, otherwise all $\hmu(\gamma_i)$ would be negative,
and \eqref{eq:range} would imply
$\mu(\gamma_i^k)<n$ for every $i$ and every $k\geq1$.
Hence $c_m=0$ for every $m\geq n$, contradicting
\eqref{eq:CH} and \eqref{eq:Morse_inequalities1}, since
$\H_0(B;\mathbb Q)\ne0$.
After reordering the orbits, we therefore have:

\textbf{\reflb{MMI}{(MMI)}} {\it There exists an integer $r_0$ with $1 \leq r_0 \leq r$ such that $\hat{\mu}(\gamma_i) > 0$ for $i = 1, \dots, r_0$ and $\hat{\mu}(\gamma_i) < 0$ for $i = r_0+1, \dots, r$.}

Our goal is to establish the lower bound on $r$ stated in the theorem. Define
\begin{eqnarray}\label{l_0:definition}
&&\ell_0' = \max_{1 \le i \le r_0} \left\{ \min\left\{ k_0 \in \mathbb{N} \,\middle|\, \mu(\gamma_i^{k+\ell}) \ge \mu(\gamma_i^{k})+2n+1 \text{ for all } k \ge 1 \text{ and } \ell \ge k_0 \right\} \right\}, \nn\\
&&\ell_0'' = \max_{r_0+1 \le i \le r} \left\{ \min\left\{ k_0 \in \mathbb{N} \,\middle|\, \mu(\gamma_i^{k+\ell}) \le \mu(\gamma_i^{k})-2n-1 \text{ for all } k \ge 1 \text{ and } \ell \ge k_0 \right\} \right\}, \nn\\
&&\ell_0 = \max\{\ell_0', \ell_0''\}.
\end{eqnarray}
When $r_0=r$, we set $\ell_0^{\prime\prime}=0$. Otherwise, both $\ell_0'$ and $\ell_0''$ are well-defined and finite due to condition \ref{MMI}.

By Theorem \ref{indexrecurrence}, for any given $N \in \mathbb{N}$, $\eta > 0$, and $\ell_0$ as defined above, there exists a sequence of integer vectors $(d_j, k_{1j}, \dots, k_{rj})$ satisfying conditions \ref{cond:i} and \ref{cond:ii}, with all $d_j, k_{1j}, \dots, k_{rj}$ divisible by $N$. In fact, we only need one such vector from this sequence with $d_j, k_{1j}, \dots, k_{rj}$  sufficiently large. Therefore, we choose $j_0$ sufficiently large and set
\[
(d, k_1, \dots, k_r) := (d_{j_0}, k_{1j_0}, \dots, k_{rj_0}).
\]

The following lemma, which provides an expression for the truncated mean Euler characteristic (see Section \ref{sec:lesh}), is one of the key steps in the proof; cf. \cite[Sublemma 5.2]{Abr17} and \cite[Lemma 5.1]{GGM18}.

\begin{lemma}\label{lemma:resonance}
	The parameters $N$ and $\eta$ can be chosen so that $d = 2s c_B$ for some integer $s$, and
	\begin{equation*}
	\sum_{i=1}^r \sum_{k=1}^{k_i} \chi(\gamma_i^k) = \sum_{i=1}^r k_i \hat{\chi}(\gamma_i) = d \chi_+(M) = s \chi(B).
	\end{equation*}
\end{lemma}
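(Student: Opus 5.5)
The plan has three parts: compute the local Euler characteristic sums exactly using a parity pattern, compute $\chi_\pm(M)$ from \eqref{eq:CH}, and use an integrality argument to convert the approximate relation from Theorem~\ref{indexrecurrence}\ref{cond:i} into the exact identity. We work in the spherically positive monotone case, as fixed above. We take $N$ to be a multiple of $2c_B$ (in particular $N$ is even). We choose $\eta>0$ so small that
\[
\eta\sum_{i=1}^r\frac{|\hat\chi(\gamma_i)|}{|\hmu(\gamma_i)|}<1 .
\]
This is possible because the $\gamma_i$ are finitely many and all $\hmu(\gamma_i)\neq0$. Then $d=2sc_B$ for some integer $s$, and every $k_i$ is even.

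\emph{First equality.} By \eqref{same parity} and the definition of good and bad orbits, the sequence $k\mapsto\chi(\gamma_i^k)$ is $2$-periodic. Its odd terms all equal $\chi(\gamma_i)$ and its even terms all equal $\chi(\gamma_i^2)$. If $\gamma_i$ is an iterate of a non-contractible orbit and all its iterates are bad, both values are $0$. Comparing with the case list for $\hat\chi$ in Section~\ref{sec:lesh}, we get $\hat\chi(\gamma_i)=\tfrac12\big(\chi(\gamma_i)+\chi(\gamma_i^2)\big)$. Since $k_i$ is even,
\[
\sum_{k=1}^{k_i}\chi(\gamma_i^k)=\frac{k_i}{2}\big(\chi(\gamma_i)+\chi(\gamma_i^2)\big)=k_i\hat\chi(\gamma_i).
\]
In particular, the left-hand side of the lemma is an integer.

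\emph{Mean Euler characteristics of $M$.} By \eqref{eq:CH}, $b_m=0$ for all $m<0$ (indeed for $m<2c_B-n$), so $\chi_-(M)=0$. In positive degrees, each block of $2c_B$ consecutive degrees contains one shifted copy of $\H_*(B;\mathbb Q)$. The shift $2mc_B-n$ is even because $n$ is even, so the signs are preserved and each block contributes exactly $\chi(B)$ to the alternating sum. Hence
\[
\chi_+(M)=\frac{\chi(B)}{2c_B},\qquad d\chi_+(M)=s\chi(B)\in\mathbb Z .
\]

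\emph{Main step: exactness.} Theorem~\ref{indexrecurrence}\ref{cond:i} together with homogeneity of $\hmu$ gives $|k_i\hmu(\gamma_i)-\rho_i d|<\eta$, that is,
\[
\Big|k_i-\frac{d}{|\hmu(\gamma_i)|}\Big|<\frac{\eta}{|\hmu(\gamma_i)|}.
\]
Multiply by $\hat\chi(\gamma_i)$ and sum over $i$. Applying the resonance relation \eqref{eq:resonance} separately to the orbits with positive and with negative mean index gives
\[
\Big|\sum_i k_i\hat\chi(\gamma_i)-d\big(\chi_+(M)+\chi_-(M)\big)\Big|<\eta\sum_i\frac{|\hat\chi(\gamma_i)|}{|\hmu(\gamma_i)|}<1 .
\]
Since $\chi_-(M)=0$ and both $\sum_ik_i\hat\chi(\gamma_i)$ and $d\chi_+(M)$ are integers, they must be equal. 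This completes the proof.

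The delicate point is that the negative-mean-index orbits enter the sum but do not appear on the right-hand side. This is handled by $\chi_-(M)=0$, which holds in the positive monotone case. It is also essential that $\eta$ is chosen before $d$, depending only on the finitely many $\gamma_i$, so that the error is uniform in $d$ and the integrality argument applies.
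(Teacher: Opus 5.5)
Your proposal is correct and follows the paper's proof essentially step by step. The common steps are: take $N$ a multiple of $2c_B$; choose $\eta$ with $\eta\sum_i|\hat\chi(\gamma_i)|/|\hmu(\gamma_i)|<1$; get $\chi_-(M)=0$ and $\chi_+(M)=\chi(B)/(2c_B)$ from \eqref{eq:CH}; use the resonance relation to compare $\sum_i k_i\hat\chi(\gamma_i)$ with $d\chi_+(M)$ up to an error less than $1$; and conclude by integrality. Your $2$-periodicity argument for the first equality spells out what the paper compresses into ``strong non-degeneracy and $k_i$ even''; the only loose phrase is that the shifted copies of $\H_*(B;\mathbb Q)$ can overlap when $2n+1>2c_B$, which does not affect the averaged count.
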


\begin{proof}
Let $N$ be any positive integer multiple of $2c_B$. The first equality follows from the strong non-degeneracy of $\ga_1,\dots,\ga_r$ since the numbers $k_i$ are even. 
	
From \eqref{eq:CH}, it is easy to see that
\[
\chi_+(M) = (-1)^n \frac{\chi (B)}{2c_B},
\]
which implies the third equality due to $n$ being even. 
	
To prove the second equality, take $\eta$ sufficiently small such that $\displaystyle\eta\sum_{i=1}^{r}\frac{|\hat\chi(\gamma_i)|}{|\hat\mu(\gamma_i)|}<1$. By \eqref{eq:CH} and the assumption $c_B>n/2$, we have $b_{-m}=0$ for all $m\in\mathbb{N}\cup\{0\}$, so $\chi_-(M)=0$. Together with \ref{MMI} and the resonance relation \eqref{eq:resonance}, we conclude that 
\begin{equation}
\sum_{i=1}^{r} \frac{\hat\chi (\gamma_i)}{|\hmu (\gamma_i)|}=\sum_{i=1}^{r_0} \frac{\hat\chi (\gamma_i)}{\hmu (\gamma_i)}-\sum_{i=r_0+1}^{r} \frac{\hat\chi (\gamma_i)}{\hmu (\gamma_i)}=\chi_+(M).\label{Req}
\end{equation}
	
Then we have
\begin{align*}
d \chi_+(M) 
&= \sum_{i=1}^{r} \frac{d\hat\chi (\gamma_i)}{|\hmu (\gamma_i)|} \\
&= \sum_{i=1}^r  k_i\hat\chi (\gamma_i) +  
\sum_{i=1}^r  \frac{(d-k_i|\hmu(\ga_i)|)\hat\chi(\gamma_i)}{|\hmu(\ga_i)|} \\
&= \sum_{i=1}^r  k_i\hat\chi (\gamma_i) +  
\sum_{i=1}^r  \frac{(d-|\hmu(\ga_i^{k_i})|)\hat\chi (\gamma_i)}{|\hmu(\ga_i)|}.
\end{align*}

By condition (i) of Theorem \ref{indexrecurrence} and \eqref{Req}, we have 
\[
\bigg| \sum_{i=1}^r \frac{(d-|\hmu(\ga_i^{k_i})|)\hat\chi(\gamma_i)}{|\hmu(\ga_i)|}\bigg|
\leq \eta\sum_{i=1}^r\frac{|\hat\chi(\gamma_i)|}{|\hmu(\ga_i)|}<1.
\]
	
Note that by our choice of $N$, the numbers $d\chi_+(M)$ and $k_i\hat\chi (\gamma_i)$ for all $i$ are integers. Therefore,
\[
d \chi_+(M) =  \sum_{i=1}^r  k_i\hat\chi (\gamma_i).
\]
\end{proof}

Fix $N$ and $\eta$ as in Lemma \ref{lemma:resonance}. (In particular, $N$ is even.) By choosing $\eta$ sufficiently small,  we can ensure that the vector $(d, k_1, \dots, k_r) = (d_{j_0}, k_{1j_0}, \dots, k_{rj_0})$ with $j_0$ sufficiently large, as given by Theorem \ref{indexrecurrence}, satisfies the following relations for every $1 \leq i \leq r$ and $1 \leq \ell \leq \ell_0$:

\begin{equation}\label{eq:IRT1}
\cz(\ga_i^{k_i-\ell}) = \rho_i d - \cz(\ga_i^{\ell}),
\end{equation}
\begin{equation}\label{eq:IRT2}
\cz(\ga_i^{k_i+\ell}) = \rho_i d + \cz(\ga_i^\ell),
\end{equation}
and
\begin{equation}\label{eq:IRT3}
\lvert\cz(\ga_i^{k_i}) - \rho_i d\rvert \leq n
\end{equation}
(Note that since $N$ is even, the integers $d$ and $k_i$ are also even.)

To proceed, we introduce the following counting notation:
\begin{equation}\label{eq:r-e}
\begin{split}
r_{+}^{e}:=&\# \{ 1\leq i\leq r_0\mid \mu (\gamma_{i}^{k_{i}} ) \geq d,\ \gamma_i^{k_i}\text{ is good},\ \mu(\gamma_i^{k_i})\equiv 0\pmod{2} \}\\
&+\# \{r_0+1\leq i\leq r\mid \mu (\gamma_{i}^{k_{i}} ) \leq -d,\ \gamma_i^{k_i}\text{ is good},\ \mu(\gamma_i^{k_i})\equiv 0\pmod{2} \}
\end{split}
\end{equation}
and 
\begin{equation}\label{eq:r-o}
\begin{split}
r_{+}^{o}:=&\# \{ 1\leq i\leq r_0\mid \mu (\gamma_{i}^{k_{i}} ) \geq d,\ \gamma_i^{k_i}\text{ is good},\ \mu(\gamma_i^{k_i})\equiv 1\pmod{2} \}\\
&+\# \{r_0+1\leq i\leq r\mid \mu (\gamma_{i}^{k_{i}} ) \leq -d,\ \gamma_i^{k_i}\text{ is good},\ \mu(\gamma_i^{k_i})\equiv 1\pmod{2} \}.
\end{split}
\end{equation}
	
We now state the following key estimate:
\begin{lemma}\label{lemma:estimation}
	The following inequality holds:
	\begin{equation}
	r_{+}^{e}-r_{+}^{o} \geq \frac{1}{2} \chi(B) + \frac{1}{2} \dim \H_n(B; \mathbb{Q}),
	\end{equation}
\end{lemma}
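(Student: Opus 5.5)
The plan is to compare the resonance identity of Lemma~\ref{lemma:resonance} with the Morse inequality \eqref{eq:Morse_inequalities2} over the degree window $[1,d-1]$. Since $d=2sc_B$ is even, both endpoints of this window are odd, and I will use the Morse inequality in the direction those endpoints give. The homology side is computed from \eqref{eq:CH}, and the chain side is split according to whether an iterate lies before, at, or after the special iterate $k_i$ supplied by Theorem~\ref{indexrecurrence}.

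\emph{Homology side.} By \eqref{eq:CH}, the degrees $1,\dots,d-1$ contain the following.
\begin{itemize}
\item The full copies of $\H_*(B;\mathbb Q)$ with $m=1,\dots,s-1$. Each contributes $\chi(B)$ to the alternating sum, because $n$ is even and the shift $2mc_B-n$ is even.
\item The bottom part of the $m=s$ copy, consisting of degrees $d-n,\dots,d-1$, which correspond to $\H_0,\dots,\H_{n-1}$ of $B$. Poincar\'e duality in dimension $2n$ with $n$ even gives $\sum_{j<n}(-1)^j\dim\H_j(B)=\tfrac12(\chi(B)-\dim\H_n(B))$.
\end{itemize}
Degree $0$ is empty: $c_B>n/2$ gives $b_m=0$ for $m\le 0$, and there are no good orbits of index $0$. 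Writing $b_n(B):=\dim\H_n(B;\mathbb Q)$, the Morse inequality should therefore give
\[
\sum_{m=1}^{d-1}(-1)^m c_m\;\le\;(s-1)\chi(B)+\tfrac12\big(\chi(B)-b_n(B)\big).
\]

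\emph{Chain side.} I want to identify $\sum_{m=1}^{d-1}(-1)^m c_m$ with
\[
\sum_i\sum_{k=1}^{k_i-1}\chi(\gamma_i^k)\;+\;\sum\big\{\chi(\gamma_i^{k_i}) : \gamma_i^{k_i}\text{ is not counted in } r_+^e \text{ or } r_+^o\big\}.
\]
Here "counted in $r_+^e$ or $r_+^o$" means good with $\rho_i\mu(\gamma_i^{k_i})\ge d$, as in \eqref{eq:r-e}--\eqref{eq:r-o}. The tools are \eqref{eq:IRT1}--\eqref{eq:IRT3} and the choice of $\ell_0$ in \eqref{l_0:definition}.
\begin{itemize}
\item For a positive-mean orbit, iterates with $k\ge k_i+\ell$, $\ell\ge1$, have index $d+\mu(\gamma_i^\ell)$. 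This lies above the window except possibly for small $\ell$ with $\mu(\gamma_i^\ell)<0$. By \eqref{eq:IRT1} these are mirrored by iterates $k_i-\ell$ of index $d-\mu(\gamma_i^\ell)>d$, which leave the window.
\item The plan is a bijection $\ell\leftrightarrow$ (iterate $k_i+\ell$ vs.\ iterate $k_i-\ell$) that swaps in and out of the window with equal $\chi$. This uses that $k_i$ is even, so goodness and parity of $\gamma_i^{k_i\pm\ell}$ agree with those of $\gamma_i^\ell$. For large $\ell$, the monotone growth built into $\ell_0$ keeps iterates strictly inside or strictly outside.
\item The same argument applies to negative-mean orbits. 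Their iterates $k<k_i$ sit symmetrically around $-d/2$, and the ones in the window should be exactly balanced by those beyond $k_i$ with index in $(0,d)$.
\end{itemize}

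\emph{Combining.} Lemma~\ref{lemma:resonance} gives $\sum_i\sum_{k<k_i}\chi(\gamma_i^k)=s\chi(B)-\sum_i\chi(\gamma_i^{k_i})$. Substituting the chain-side identity into the Morse inequality, the iterates $\gamma_i^{k_i}$ not on the correct side of $\pm d$ cancel. This leaves
\[
\sum\big\{\chi(\gamma_i^{k_i}) : \gamma_i^{k_i}\text{ good},\ \rho_i\mu(\gamma_i^{k_i})\ge d\big\}\;\ge\; s\chi(B)-(s-1)\chi(B)-\tfrac12\big(\chi(B)-b_n(B)\big)=\tfrac12\chi(B)+\tfrac12 b_n(B).
\]
The left-hand side equals $r_+^e-r_+^o$ by \eqref{eq:r-e}--\eqref{eq:r-o}, which is the claim.

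The main obstacle is the chain-side bookkeeping. I have to verify that every iterate $\gamma_i^k$ with $k\ne k_i$ is counted in the window exactly as in the $\sum_{k<k_i}$ sum, via the reflection $k_i\pm\ell$, for both signs of the mean index and for small $\ell$ where $\mu(\gamma_i^\ell)$ may have the "wrong" sign. I would first check this with \eqref{eq:IRT1}--\eqref{eq:IRT2} and \eqref{eq:range} for $\ell\le\ell_0$, and then use $\ell_0$ to handle all larger $\ell$.
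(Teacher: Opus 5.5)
There is a genuine gap, and it sits in the chain-side identity you flag as the main obstacle: with the window $[1,d-1]$ that identity is false.

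Lemma~\ref{lemma:resonance} counts every iterate $\gamma_i^k$ with $k\le k_i$ of every simple orbit, including orbits with $\hmu(\gamma_i)<0$. For such an orbit, \eqref{eq:range} and the choice of $\ell_0$ place the iterates with $k<k_i-\ell_0$ in degrees $[-d+n+1,n)$. So all but boundedly many of them have negative index and lie outside your window.

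The compensation you describe for these orbits does not exist. For $\rho_i=-1$, \eqref{eq:IRT2} gives $\mu(\gamma_i^{k_i+\ell})=-d+\mu(\gamma_i^\ell)<-d+n$ for $\ell\le\ell_0$. The choice of $\ell_0$ together with \eqref{eq:IRT3} gives $\mu(\gamma_i^{k_i+\ell})\le -d-n-1$ for $\ell>\ell_0$. Hence no iterate beyond $k_i$ has index in $(0,d)$.

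The same defect appears for positive-mean orbits whose early iterates have index in $(-n,0)$. An example is $\mu(\gamma_i)=-1$, which is exactly the case Theorem~\ref{thm:main} newly admits. Such iterates are counted in $\sum_{k<k_i}\chi(\gamma_i^k)$ but lie outside $[1,d-1]$, and the reflection $k_i\pm\ell$ never reaches them.

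In aggregate, your identity is equivalent to $X:=\sum_{m=-d+1}^{-1}(-1)^mc_m=0$. Let $C_*$ be the chain complex of good contractible orbits. The usual rank count, using $b_m=0$ for $m\le 0$ and $C_0=0$, gives $X=-\operatorname{rank}\bigl(\partial\colon C_{-d+1}\to C_{-d}\bigr)$. This is only known to be $\le 0$, and nothing forces it to vanish once negative-mean orbits are present.

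The repair is the paper's choice of window: work on $[-d+1,d-1]$, whose endpoints are still odd. Because $b_m=0$ for $m\le0$, your homology computation is unchanged and still gives $s\chi(B)-\frac12\chi(B)-\frac12\dim\H_n(B;\mathbb Q)$. On the chain side, every iterate with $k<k_i-\ell_0$ now lies strictly inside the window for both signs of $\rho_i$. Your reflection $k_i-\ell\leftrightarrow k_i+\ell$ then handles the edge at $+d$ for positive-mean orbits and the edge at $-d$ for negative-mean orbits. This yields $\sum_{m=-d+1}^{d-1}(-1)^mc_m=s\chi(B)-(r_+^e-r_+^o)$, and the Morse inequality finishes as you planned.

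Alternatively, you can keep your inequality on $[1,d-1]$ and add the Morse inequality \eqref{eq:Morse_inequalities2} on $[-d+1,-1]$, which gives $X\le0$. The sign is favorable, but evaluating the chain side still requires the symmetric bookkeeping.

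Note also where the index-zero hypothesis actually enters: in the reflection, not in degree $0$. Suppose some good $\gamma_i^\ell$ with $\ell\le\ell_0$ had index $0$. Then both $\gamma_i^{k_i\pm\ell}$ would have index exactly $\rho_i d$, outside the window, and $\gamma_i^{k_i-\ell}$ would be left uncompensated.
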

\begin{proof}	
Observe that for each periodic orbit
$\ga_i$, there are four types of iterates outside $\ga_i^{k_i}$:
\begin{itemize}
	\item[(A)] $\ga_i^{k_i-\ell}$ with $\ell_0<\ell<k_i$;
	\item[(B)] $\ga_i^{k_i-\ell}$ with $1\leq \ell \leq \ell_0$;
	\item[(C)] $\ga_i^{k_i+\ell}$ with $1\leq \ell \leq \ell_0$;
	\item[(D)] $\ga_i^{k_i+\ell}$ with $\ell>\ell_0$.
\end{itemize}

We now analyze the contributions of these iterates to the alternating sum
\begin{equation} \label{eq:c_m}
\sum_{m=-d+1}^{d-1} (-1)^m c_m,
\end{equation}
where $c_m$ is the number of good contractible closed Reeb orbits of index $m$; see \eqref{lesh:nd}.

First, class (A) iterates have indices in $(-d+1, d-1)$ and hence all good orbits here contribute to \eqref{eq:c_m}.
Indeed, by definition of $\ell_0$, for all $\ell_0<\ell<k_i$:
\begin{align*}
\cz(\gamma_i^{k_i}) &\geq \cz(\gamma_i^{k_i - \ell}) + 2n+1 \quad \text{for } 1 \leq i \leq r_0, \\
\cz(\gamma_i^{k_i}) &\leq \cz(\gamma_i^{k_i - \ell}) - 2n-1 \quad \text{for } r_0+1 \leq i \leq r.
\end{align*}
Combined with \eqref{eq:IRT3}, this implies for all $\ell_0<\ell<k_i$:
\begin{equation}\label{classA}
\begin{aligned}
\cz(\gamma_i^{k_i - \ell}) &\leq d - n - 1 \quad \text{for } 1 \leq i \leq r_0, \\
\cz(\gamma_i^{k_i - \ell}) &\geq -d + n + 1 \quad \text{for } r_0+1 \leq i \leq r.
\end{aligned}
\end{equation}

By \eqref{eq:range} and \ref{MMI}, for every $k\geq1$ we have
\begin{equation}\label{eq:iterate-sign-bounds}
\begin{cases}
\mu(\gamma_i^k)>-n, & 1\leq i\leq r_0,\\
\mu(\gamma_i^k)<n, & r_0+1\leq i\leq r.
\end{cases}
\end{equation}
Choose $d$ sufficiently large that
\begin{equation}\label{eq:d-large}
 d>n+1,
 \qquad
 d>1+\max\bigl\{|\mu(\gamma_i^\ell)|:
 1\leq i\leq r,\ 1\leq\ell\leq\ell_0\bigr\}.
\end{equation}
Then \eqref{classA}, \eqref{eq:iterate-sign-bounds}, and \eqref{eq:d-large} imply that all iterates in class (A) have indices in $(-d+1,d-1)$. Moreover, \eqref{eq:IRT1}, \eqref{eq:IRT2}, and \eqref{eq:d-large} give, for $1\leq\ell\leq\ell_0$,
\begin{equation}\label{eq:nearby-iterate-bounds}
\begin{cases}
\mu(\gamma_i^{k_i\pm\ell})>-d+1, & 1\leq i\leq r_0,\\
\mu(\gamma_i^{k_i\pm\ell})<d-1, & r_0+1\leq i\leq r.
\end{cases}
\end{equation}

Class (D) orbits do not contribute to \eqref{eq:c_m} because for all $\ell > \ell_0$:
\begin{align*}
\cz(\gamma_i^{k_i + \ell}) &\geq \cz(\gamma_i^{k_i}) + 2n+1 \geq d + n+1 \quad \text{for } 1 \leq i \leq r_0, \\
\cz(\gamma_i^{k_i + \ell}) &\leq \cz(\gamma_i^{k_i}) - 2n-1 \leq -d - n-1 \quad \text{for } r_0+1 \leq i \leq r,
\end{align*}
where the second inequality in each case follows from \eqref{eq:IRT3}{.}

Since $k_i$ is even, \eqref{same parity} implies that
$\gamma_i^{k_i-\ell}$, $\gamma_i^{k_i+\ell}$, and
$\gamma_i^\ell$ are either all good or all bad, for
$1\leq\ell\leq\ell_0$.
Classes (B) and (C) can be further classified as follows:

\begin{itemize}
	\item[(B1)] $\gamma_i^{k_i-\ell}$ with $1 \leq \ell \leq \ell_0$, $\cz(\gamma_i^\ell) \ge 0$ for $1 \le i \le r_0$, or $\cz(\gamma_i^\ell) \le 0$ for $r_0+1 \le i \le r$;
	\item[(B2)] $\gamma_i^{k_i-\ell}$ with $1 \leq \ell \leq \ell_0$, $\cz(\gamma_i^\ell) < 0$ for $1 \le i \le r_0$, or $\cz(\gamma_i^\ell) > 0$ for $r_0+1 \le i \le r$;
	\item[(C1)] $\gamma_i^{k_i+\ell}$ with $1 \leq \ell \leq \ell_0$, $\cz(\gamma_i^\ell) \ge 0$ for $1 \le i \le r_0$, or $\cz(\gamma_i^\ell) \le 0$ for $r_0+1 \le i \le r$;
	\item[(C2)] $\gamma_i^{k_i+\ell}$ with $1 \leq \ell \leq \ell_0$, $\cz(\gamma_i^\ell) < 0$ for $1 \le i \le r_0$, or $\cz(\gamma_i^\ell)> 0$ for $r_0+1 \le i \le r$;
\end{itemize}

By \eqref{eq:IRT1} and \eqref{eq:nearby-iterate-bounds}, all good orbits from class (B1) contribute to \eqref{eq:c_m}.
The key point here is that $\alpha$ admits no good contractible periodic orbits of index zero (this is the main point where this condition is used).
For every good iterate $\gamma_i^{k_i-\ell}$ in class (B1), we have $\rho_i \cz(\gamma_i^\ell) > 0$, and by \eqref{eq:IRT1},
$
\cz(\gamma_i^{k_i-\ell}) = \rho_i d - \cz(\gamma_i^\ell),
$
so $\cz(\gamma_i^{k_i-\ell}) < d$ or $> -d$ whenever $\gamma_i^{k_i-\ell}$ is good.
In contrast, class (B2) makes no contribution to \eqref{eq:c_m}, since $\rho_i \cz(\gamma_i^\ell) < 0$, and hence
$
\cz(\gamma_i^{k_i-\ell}) = \rho_i d - \cz(\gamma_i^\ell) > d \text{ or } < -d
$
by \eqref{eq:IRT1}.

Similarly, \eqref{eq:IRT2} and \eqref{eq:nearby-iterate-bounds} show that class (C1) makes no contribution to \eqref{eq:c_m}, whereas all good orbits in class (C2) contribute to \eqref{eq:c_m}.
Indeed, by \eqref{eq:IRT2},
$
\cz(\gamma_i^{k_i+\ell}) = \rho_i d + \cz(\gamma_i^\ell),
$
which is $\ge d$ or $\le -d$ whenever $\rho_i \cz(\gamma_i^\ell) \ge 0$, and $< d$ or $> -d$ whenever $\rho_i \cz(\gamma_i^\ell) < 0$.

(Above, $\rho_i$ is defined by Theorem \ref{indexrecurrence}, and $\cz(\gamma_i^{k_i+\ell})$ and $\cz(\gamma_i^\ell)$ have the same parity since the integers $k_i$ are even.)

We set 
\begin{equation*} 
\begin{aligned}
c^e_\pm = &\sum_{i=1}^{r_{0}} \#\{1 \leq \ell \leq \ell_0 \mid \cz(\gamma_i^\ell) < 0,\ 
\gamma_i^{k_i\pm \ell}\text{ is good and }\cz(\gamma_i^\ell)\text{ is even}\} \\
&+ \sum_{i=r_{0}+1}^{r} \#\{1 \leq \ell \leq \ell_0 \mid \cz(\gamma_i^\ell) > 0,\ 
\gamma_i^{k_i\pm \ell}\text{ is good and }\cz(\gamma_i^\ell)\text{ is even}\}
\end{aligned}
\end{equation*}
and
\begin{equation*} 
\begin{aligned}
c^o_\pm = &\sum_{i=1}^{r_{0}} \#\{1 \leq \ell \leq \ell_0 \mid \cz(\gamma_i^\ell) < 0,\ 
\gamma_i^{k_i\pm \ell}\text{ is good and }\cz(\gamma_i^\ell)\text{ is odd}\} \\
&+ \sum_{i=r_{0}+1}^{r} \#\{1 \leq \ell \leq \ell_0 \mid \cz(\gamma_i^\ell) > 0,\ 
\gamma_i^{k_i\pm \ell}\text{ is good and }\cz(\gamma_i^\ell)\text{ is odd}\}.
\end{aligned}
\end{equation*}
Note that, since for all $i = 1, 2, \ldots, r$, $\cz(\gamma_i^{k_i-\ell})$ and $\cz(\gamma_i^{k_i+\ell})$ have the same parity by \eqref{same parity}, we have $c^e_+ = c^e_-$ and $c^o_+ = c^o_-$. Here by \eqref{lesh:nd} we remark that $c_\pm^e$ only counts the contributions of $\ga_i^{k_i\pm \ell}$ with $1\le \ell\le \ell_0$ to the even-th Morse-type numbers, and $c_\pm^o$ only counts the contributions of $\ga_i^{k_i\pm \ell}$ with $1\le \ell\le \ell_0$ to the odd-th Morse-type numbers.

Since $\ga_1,\dots,\ga_r$ are strongly non-degenerate, Lemma~\ref{lemma:resonance} and the preceding discussion give
\begin{equation}\label{cm1}
\begin{split}
\sum_{m=-d+1}^{d-1} (-1)^m c_m
&=\sum_{m=-d+1}^{d-1} (-1)^m \sum_{i=1}^{r}\sum_{k\geq 1}\dim \HC_{m}(\gamma_i^k)\\
&=\sum_{i=1}^{r} \sum_{\substack{k\geq 1\\ -d+1\leq\mu (\gamma_{i}^k)\leq d-1}} \chi(\gamma_{i}^k)\\
&=\sum_{i=1}^{r} \sum_{k=1}^{k_{i}} \chi(\gamma_{i}^k)
+ \sum_{i=1}^{r} \sum_{\substack{k\ge k_{i}+1\\ -d+1\leq\mu (\gamma_{i}^k)\leq d-1}} \chi(\gamma_{i}^k)\\
&\quad
-\sum_{i=1}^{r} \sum_{\substack{1\le k\le k_i-1\\
		\mu(\gamma_i^k)\geq d\text{ or }\mu(\gamma_i^k)\leq -d}} \chi(\gamma_{i}^k)
-\sum_{\substack{1\leq i\leq r\\
		\mu(\gamma_{i}^{k_{i}})\geq d\text{ or }\mu (\gamma_{i}^{k_i})\leq -d}}\chi(\gamma_{i}^{k_{i}})\\
&=s\chi(B)+(c_+^e-c_+^o)-(c_-^e-c_-^o)-(r_+^{e} - r_+^{o})\\
&=s\chi(B)-(r_+^{e} - r_+^{o}),
\end{split}
\end{equation}
where $r_+^{e}$ and $r_+^{o}$ are defined by \eqref{eq:r-e} and \eqref{eq:r-o}, respectively.

Next, we calculate the Betti number sum as follows. First, by \eqref{eq:CH} in Proposition \ref{prop:CH}, we establish the recurrence relation:
\begin{equation} \label{bm+2cb}
\begin{split}
b_{m+2c_B} &= \sum_{j \geq 1} \dim \H_{m+2c_B-2jc_B+n}(B;\mathbb{Q}) \\
&= \sum_{h \geq 0} \dim \H_{m-2hc_B+n}(B;\mathbb{Q}) \\
&= \dim \H_{m+n}(B;\mathbb{Q}) + b_m.
\end{split}
\end{equation}

Since $2c_B>n$, formula~\eqref{eq:CH} gives
$b_m=0$ for every $m\le0$ and
\[
b_m=\dim\H_{m-2c_B+n}(B;\mathbb Q),
\qquad 0\le m<2c_B.
\]
Moreover,
$\H_{m+2c_B+n}(B;\mathbb Q)=0$ for every $m\ge0$,
since $\dim B=2n$.
Consequently, \eqref{bm+2cb} yields
\[
b_{m+2jc_B}
=
b_m+\dim\H_{m+n}(B;\mathbb Q),
\qquad m\ge0,\quad j\ge1.
\]
Using these identities and $d=2sc_B$, we obtain
\begin{equation}\label{bettinumber}
\begin{split}
\sum_{m=-d+1}^{d-1} (-1)^m b_m &= \sum_{m=0}^{d-1} (-1)^m b_m \\
&= \sum_{m=0}^{2c_B-1} (-1)^m (b_m + b_{m+2c_B} + \cdots + b_{m+(2s-2)c_B}) \\
&= \sum_{m=0}^{2c_B-1} (-1)^m \left( s b_m + (s-1) \dim \H_{m+n}(B;\mathbb{Q}) \right) \\
&= s \sum_{m=0}^{2c_B-1} (-1)^m b_m + (s-1) \sum_{m=0}^{2c_B-1} (-1)^m \dim \H_{m+n}(B;\mathbb{Q}) \\
&= s \sum_{m=0}^{2c_B-1} (-1)^m \left( \dim \H_{m-2c_B+n}(B;\mathbb{Q}) + \dim \H_{m+n}(B;\mathbb{Q}) \right) \\
&\quad - \sum_{m=0}^{2c_B-1} (-1)^m \dim \H_{m+n}(B;\mathbb{Q}) \\
&= s \chi(B) - \sum_{m=0}^{n} (-1)^m \dim \H_{m+n}(B;\mathbb{Q}).
\end{split}
\end{equation}
By \eqref{cm1}, \eqref{bettinumber}, and the Morse inequality \eqref{eq:Morse_inequalities2}, we derive the following key relation:
\begin{equation}\label{eq:estimation'}
\begin{aligned} 
s\chi(B) - (r_+^{e} - r_+^{o}) 
&= \sum_{m=-d+1}^{d-1} (-1)^m c_m \\
&\le \sum_{m=-d+1}^{d-1} (-1)^m b_m \\
&= s\chi(B) - \sum_{m=0}^{n} (-1)^m \dim \H_{m+n}(B;\mathbb{Q}).
\end{aligned}
\end{equation}

Since $B$ is a $2n$-dimensional closed symplectic manifold, Poincar\'e duality implies $\H_{k}(B;\mathbb{Q}) \cong \H_{2n-k}(B;\mathbb{Q})$ for all $k \in \mathbb{Z}$. Consequently, \eqref{eq:estimation'} implies that
\begin{equation*} 
r_{+}^{e} - r_{+}^{o} \geq \frac{1}{2}\chi(B) + \frac{1}{2}\dim \H_{n}(B;\mathbb{Q}).
\end{equation*}
\end{proof}

\begin{proof}[Proof of Theorem~\ref{thm:main'}]
	Since $B$ is connected, $\dim\H_0(B;\mathbb Q)=1$.
	By Poincar\'e duality and the fact that $n$ is even,
    \[
	\frac12\chi(B)+\frac12\dim\H_n(B;\mathbb Q)
	=\sum_{j=0}^{n}(-1)^j\dim\H_j(B;\mathbb Q)
	\in\mathbb Z.
	\]
	By Lemma~\ref{lemma:estimation},
	\[
	r\geq r_+^e
	\geq r_+^e-r_+^o
	\geq
	\frac12\chi(B)+\frac12\dim\H_n(B;\mathbb Q).
	\]
	
	Suppose that
    \[
	r=
	\frac12\chi(B)+\frac12\dim\H_n(B;\mathbb Q).
	\]
	Then equality holds throughout the preceding chain
	of inequalities. In particular, $r=r_+^e$.
	By \eqref{eq:r-e}, each $\gamma_i^{k_i}$ is good
	and has even Conley--Zehnder index.
	All good iterates of the same underlying simple orbit
	have the same index parity.
	Hence every good contractible periodic orbit has even
	Conley--Zehnder index.
These orbits generate $\mathcal C_*^0(\alpha)$, so its chain groups
vanish in odd degrees. It follows that
\[
\HC_j^0(M)=0 \qquad\text{for every odd }j.
\]
Fix an odd integer $q$. In the spherically positive monotone case,
\eqref{eq:CH} identifies $\H_q(B;\mathbb Q)$ with the $m=1$ summand
of $\HC_{q+2c_B-n}^0(M)$. In the spherically negative monotone case,
\eqref{eq:CH-nm} identifies $\H_q(B;\mathbb Q)$ with the $m=1$
summand of $\HC_{q-2c_B-n}^0(M)$.
Both degrees are odd because $n$ is even. Since $\HC_j^0(M)=0$
for every odd $j$, it follows in either case that
\[
\H_q(B;\mathbb Q)=0
\qquad\text{for every odd }q.
\]
The same argument applies with Novikov coefficients, since
extension of scalars preserves the Betti numbers of $B$.
	
Every good contractible periodic orbit has even index,
	and index zero is excluded by assumption. Hence
\[
	\mu(\gamma)\notin\{-1,0,1\}
	\qquad
	\text{for every good contractible periodic orbit }\gamma.
	\]
	The hypotheses of
	Theorem~\ref{prop:previous-prequantization}
	are therefore satisfied. Since $n$ is even and
	$\H_{n-1}(B;\mathbb Q)=0$, its conclusion gives
\[
	r\geq \chi(B)+4\dim\H_{n-1}(B;\mathbb Q)=\chi(B).
	\]
On the other hand, Poincar\'e duality and the vanishing
	of the odd-degree homology established above imply
\[
	\chi(B)
	-\left(
	\frac12\chi(B)+\frac12\dim\H_n(B;\mathbb Q)
	\right)
	=
	\sum_{j=0}^{n-1}\dim\H_j(B;\mathbb Q)
	\geq 1.
	\]
Therefore,
\begin{equation}\label{eq:contra}
    r\geq\chi(B)>
    \frac12\chi(B)+\frac12\dim\H_n(B;\mathbb Q)=r,
\end{equation}
which is impossible. Thus
$r>\frac12\chi(B)+\frac12\dim\H_n(B;\mathbb Q)$.
Since both sides are integers, we conclude that
\[
    r\geq\frac12\chi(B)+\frac12\dim\H_n(B;\mathbb Q)+1.
\]
This proves the theorem.
\end{proof}

\section{Proof of Theorem \ref{thm:minor}}

Let $(M^{5},\xi)$ be the prequantization $S^1$-bundle over a closed symplectic manifold $(B,\omega)$ such that
 $\omega|_{\pi_2(B)} \neq 0${,} $c_B > 1$, and
$\H_{1}(B;\mathbb{Q}) = 0$.
Let $\alpha$ be a non-degenerate contact form supporting $\xi$ that has no simple contractible periodic orbits $\gamma$ with $\cz(\gamma)=0$. Assume that $M$ satisfies either condition~\ref{cond:F0} or condition~\ref{cond:NF0} in Theorem~\ref{thm:minor}.

We begin the proof by assuming that $\alpha$ admits finitely many distinct simple contractible closed orbits $\gamma_1, \dots, \gamma_r$.

Since $\gamma_1, \dots, \gamma_r$ are
strongly  non-degenerate,
by \eqref{eq:CZ_iteration} in Theorem~\ref{indexformula}, we have for each $\gamma=\gamma_i$
\begin{eqnarray}
&&\mu(\ga^k)
=k\left(\mu(\ga)-r_i \right) + 2\sum_{j=1}^{r_i}\left\lfloor\frac{k\theta_{i,j}}{2\pi}\right\rfloor + r_i  \label{index-iterate},\\
&&\hat\mu(\ga)=\mu(\ga)-r_i+\sum_{j=1}^{r_i}\frac{\theta_{i,j}}{\pi}\label{meanindex-iterate},
\end{eqnarray}
where $r_i\in\{0,1,2\}$ and $\frac{\theta_{i,j}}{2\pi}\in(0,1)\setminus \mathbb{Q}$ for $1\le j\le r_i$.

\begin{lemma}\label{lemma:nonzeroindex}
For any $ 1\le i\le r $, the Conley--Zehnder index of each good $ k $-th iterate $ \ga_{i}^{k} $ of $\ga_{i} $ with $ k \in \mathbb{N} $ satisfies $ \cz(\ga_{i}^{k})\neq 0 $.
\end{lemma}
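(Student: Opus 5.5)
The plan is to read off $\mu(\gamma_i^k)$ directly from the iteration formula \eqref{index-iterate}, specialized to $n=2$, and to split into cases according to the number $r_i\in\{0,1,2\}$ of irrational elliptic rotation blocks $R(\theta_{i,j})$ in the normal form of Theorem~\ref{indexformula}. In each case I expect to show $\mu(\gamma_i^k)\neq0$ for \emph{every} $k\in\mathbb N$, not only for good iterates. The hypothesis enters only through $k=1$: if $\gamma_i$ is good, then $\mu(\gamma_i)\neq0$ because index zero is excluded for simple contractible orbits. If $\gamma_i$ is bad, then all its iterates are bad and there is nothing to prove, since a simple contractible orbit that is a bad even iterate of a non-contractible orbit has only bad iterates. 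Since $c_1(\xi)|_{\pi_2(M)}=0$ in our setting, the capping-disk trivialization of $\gamma_i^k$ is the $k$-fold iterate of that of $\gamma_i$. Thus $\mu(\gamma_i^k)=\mu(\Phi_i^k)$ for the path $\Phi_i$ of $\gamma_i$, and \eqref{index-iterate} applies.

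\emph{Case $r_i=0$.} Then \eqref{index-iterate} gives $\mu(\gamma_i^k)=k\mu(\gamma_i)$. This covers purely hyperbolic end matrices and the case $r_*=1$ with an $N_2$ block. Since $\mu(\gamma_i)\neq0$, we get $\mu(\gamma_i^k)\neq0$.

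\emph{Case $r_i=1$.} Then $\mu(\gamma_i^k)=k(\mu(\gamma_i)-1)+2\lfloor k\theta_{i,1}/2\pi\rfloor+1$. By Proposition~\ref{index:odd-even} and additivity, $\mu(\gamma_i)$ is odd: it is a sum of the odd index of an $R(\theta)$ block and the index of a hyperbolic block in $\Sp(2)$ (or equivalently, read off from the formula at $k=1$). Hence $\mu(\gamma_i)-1$ is even, and $\mu(\gamma_i^k)$ is odd for every $k$, in particular nonzero.

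\emph{Case $r_i=2$.} Then
\[
\mu(\gamma_i^k)=k(\mu(\gamma_i)-2)+2\Big\lfloor\tfrac{k\theta_{i,1}}{2\pi}\Big\rfloor+2\Big\lfloor\tfrac{k\theta_{i,2}}{2\pi}\Big\rfloor+2 .
\]
Setting $k=1$ shows that $\mu(\gamma_i)$ is even, so $\mu(\gamma_i)\neq0$ forces $\mu(\gamma_i)\ge2$ or $\mu(\gamma_i)\le-2$.
If $\mu(\gamma_i)\ge2$, every term except the final $+2$ is nonnegative, so $\mu(\gamma_i^k)\ge2$.
If $\mu(\gamma_i)\le-2$, use $\theta_{i,j}/2\pi\in(0,1)$, which gives $\lfloor k\theta_{i,j}/2\pi\rfloor\le k-1$. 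Then
\[
\mu(\gamma_i^k)\le k(\mu(\gamma_i)-2)+4(k-1)+2=k(\mu(\gamma_i)+2)-2\le-2 .
\]

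The only step needing care is this last case, where the two floor terms could a priori compensate the linear term. The crude bounds $0\le\lfloor k\theta/2\pi\rfloor\le k-1$, together with the evenness of $\mu(\gamma_i)$, which excludes $\mu(\gamma_i)=\pm1$, suffice. This evenness is exactly where dimension five ($n=2$, so $r_i\le2$) is used.
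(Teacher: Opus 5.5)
Your case $r_i=1$ contains a false step. For $n=2$ and $r_i=1$, Theorem~\ref{indexformula} gives $h=1$, and the hyperbolic block may be $D(-2)$ as well as $D(2)$. By Proposition~\ref{index:odd-even}, $D(-2)$ has odd index. So when the end matrix lies in the component of $R(\theta)\diamond D(-2)$, you get $\mu(\gamma_i)$ equal to odd $+$ odd, which is even, not odd. Then $\mu(\gamma_i)-1$ is odd, and \eqref{index-iterate} gives $\mu(\gamma_i^k)\equiv k+1 \pmod 2$. The odd iterates are exactly the good ones here, and they have even index. Thus ``odd, hence nonzero'' fails for precisely the iterates the lemma is about.

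Your fallback, ``read off from the formula at $k=1$'', cannot detect this. At $k=1$ every floor term vanishes, and \eqref{index-iterate} reduces to the tautology $\mu(\gamma_i)=\mu(\gamma_i)$. You use the same tautology in case $r_i=2$ (``setting $k=1$ shows that $\mu(\gamma_i)$ is even''). There the claim is true, but the valid reason is Proposition~\ref{index:odd-even} with additivity applied to $R(\theta_1)\diamond R(\theta_2)$, as the paper does in Lemma~\ref{lemma:nonzeromean}.

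The repair is to treat the $D(-2)$ subcase like your case $r_i=2$. There $\mu(\gamma_i)$ is even and nonzero, so $|\mu(\gamma_i)|\ge 2$. The bounds $0\le\lfloor k\theta/2\pi\rfloor\le k-1$ then give $\mu(\gamma_i^k)\ge k+1$ when $\mu(\gamma_i)\ge 2$. When $\mu(\gamma_i)\le -2$ they give $\mu(\gamma_i^k)\le k(\mu(\gamma_i)+1)-1\le -k-1$. With this fix, your stronger claim that $\mu(\gamma_i^k)\neq 0$ for every $k$ does hold.

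The paper avoids the normal-form parity bookkeeping altogether. It supposes some good iterate $\gamma_{i_0}^{k_0}$ has index $0$ and uses \emph{goodness} to conclude that $\mu(\gamma_{i_0})$ is even, hence $|\mu(\gamma_{i_0})|\ge 2$. The single estimate $2k_0\le k_0|\mu(\gamma_{i_0})|=\bigl|(k_0-1)r_{i_0}-2\sum_j\lfloor k_0\theta_{i_0,j}/2\pi\rfloor\bigr|\le (k_0-1)r_{i_0}$ then forces $r_{i_0}>2$. This covers all normal forms at once, including the $D(-2)$ case you missed.
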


\begin{proof}
We argue by contradiction. Suppose that there exist an integer $1 \leq i_0 \leq r$ and an integer $k_0 \geq 2$ such that the iterate $\gamma_{i_0}^{k_0}$ is good and $\cz(\gamma_{i_0}^{k_0}) = 0$. A bad orbit has only bad iterates, so $\gamma_{i_0}$ is good. Its index therefore has the same parity as that of $\gamma_{i_0}^{k_0}$. 
By the hypothesis that $\cz(\gamma_{i_0}) \neq 0$, it follows that $\mu(\gamma_{i_0}) \in 2\mathbb{Z} \setminus \{0\}$, and hence
\begin{equation}\label{eq:est}
|\mu(\gamma_{i_0})| \geq 2.
\end{equation}
Using \eqref{index-iterate}, we obtain
\begin{equation}\label{eq:est1}
\cz(\gamma_{i_0}^{k_0}) = k_0(\mu(\gamma_{i_0}) - r_{i_0}) + 2\sum_{j=1}^{r_{i_0}} \left\lfloor \frac{k_0\theta_{i_0,j}}{2\pi} \right\rfloor + r_{i_0} = 0.
\end{equation}
Combining \eqref{eq:est} and \eqref{eq:est1}, we have
\begin{equation}\label{eq:est2}
\left| (k_0-1) r_{i_0} - 2\sum_{j=1}^{r_{i_0}} \left\lfloor \frac{k_0\theta_{i_0,j}}{2\pi} \right\rfloor \right| = k_0 |\mu(\gamma_{i_0})| \geq 2k_0.
\end{equation}
Note that for $j = 1, \dots, r_{i_0}$, we have $\frac{\theta_{i_0,j}}{2\pi} \in (0,1) \setminus \mathbb{Q}$ and $0 \leq \left\lfloor \frac{k_0\theta_{i_0,j}}{2\pi} \right\rfloor \leq k_0-1$. Therefore,
\begin{equation}\label{eq:est3}
\left| (k_0-1) r_{i_0} - 2\sum_{j=1}^{r_{i_0}} \left\lfloor \frac{k_0\theta_{i_0,j}}{2\pi} \right\rfloor \right| \leq (k_0-1) r_{i_0}.
\end{equation}
Combining \eqref{eq:est2} and \eqref{eq:est3}, we obtain $r_{i_0} \geq \frac{2k_0}{k_0-1} > 2$, which contradicts the fact that $r_{i_0} \leq 2$. The proof is complete.
\end{proof}

\begin{lemma}\label{lemma:nonzeromean}
	For any $1 \le i \le r$, we have $\hmu(\gamma_i) \neq 0$.
\end{lemma}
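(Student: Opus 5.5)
The plan is to read off $\hmu(\gamma_i)$ from \eqref{meanindex-iterate} and split into cases according to $r_i\in\{0,1,2\}$. Recall that here $n=2$, so by Theorem~\ref{indexformula} we have $r_i+2r_{*}+h=2$. The only input from the hypothesis is that $\gamma_i$ is a simple contractible orbit, so $\mu(\gamma_i)\neq 0$.

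\emph{Case $r_i=0$.} Formula \eqref{meanindex-iterate} gives $\hmu(\gamma_i)=\mu(\gamma_i)$. This is nonzero by hypothesis.

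\emph{Case $r_i=1$.} We get $\hmu(\gamma_i)=\mu(\gamma_i)-1+\theta_{i,1}/\pi$. Since $\theta_{i,1}/2\pi$ is irrational, $\theta_{i,1}/\pi$ is irrational, so $\hmu(\gamma_i)$ is an integer plus an irrational number. Hence it is irrational, and in particular nonzero.

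\emph{Case $r_i=2$.} This is the only case needing work. Here $r_*=h=0$, and $\hmu(\gamma_i)=\mu(\gamma_i)-2+(\theta_{i,1}+\theta_{i,2})/\pi$ with $(\theta_{i,1}+\theta_{i,2})/\pi\in(0,4)$. So $\hmu(\gamma_i)=0$ would force $\mu(\gamma_i)=2-(\theta_{i,1}+\theta_{i,2})/\pi\in(-2,2)$, i.e.\ $\mu(\gamma_i)\in\{-1,0,1\}$. The value $0$ is excluded by hypothesis, so it remains to show that $\mu(\gamma_i)$ is even, which excludes $\pm1$.

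For the parity, I will use the normal form of Theorem~\ref{indexformula}. The path $f$ connects $P=\Phi(1)$ to $R(\theta_{i,1})\diamond R(\theta_{i,2})$ inside $\Omega^0(P)\subset\Sp^*(4)$. Concatenating $\Phi$ with $f$ therefore does not change the Conley--Zehnder index, since $\mu$ is locally constant on $\TSpn(4)$. Next, I homotope the concatenated path, with endpoint fixed, to a $\diamond$-product $\Phi_1\diamond\Phi_2$ of paths in $\TSpn(2)$ ending at $R(\theta_{i,1})$ and $R(\theta_{i,2})$. This is possible because the fiber over the endpoint is a $\pi_1(\Sp(4))\cong\Z$-torsor, and a full Maslov loop may be absorbed into $\Phi_1$. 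Additivity of $\mu$ under $\diamond$ then gives $\mu(\gamma_i)=\mu(\Phi_1)+\mu(\Phi_2)$. Each summand is odd by Proposition~\ref{index:odd-even}(1), so $\mu(\gamma_i)$ is even.

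This completes all cases. The main (though mild) obstacle is this parity bookkeeping in the case $r_i=2$: one must justify that the index of the original path equals the sum of the indices of the two rotation blocks, modulo $2$. An alternative route to the same parity is via the sign of $\det(\Phi(1)-I)$: both $R$-blocks contribute positive factors, and the parity of $\mu$ is determined by this sign.
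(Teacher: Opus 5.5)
Your proof is correct and follows the paper's argument. Both use the same case split on $r_i$ via \eqref{meanindex-iterate}: $r_i=1$ is ruled out by irrationality, $r_i=0$ forces $\mu(\gamma_i)=0$, and $r_i=2$ forces $\mu(\gamma_i)\in\{-1,0,1\}$, after which Proposition~\ref{index:odd-even} gives evenness. Your reduction of $R(\theta_{i,1})\diamond R(\theta_{i,2})$ to a $\diamond$-product of two paths in $\TSpn(2)$ spells out the step the paper leaves implicit when it cites that proposition.
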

\begin{proof}
	We proceed by contradiction. Assume that there exists an integer $1\leq i_1\leq r$ such that $\hmu(\gamma_{i_1}) = 0$. Then, by equation \eqref{meanindex-iterate}, we have
	\begin{equation}\label{eq:i1}
	\hmu(\gamma_{i_1}) = \mu(\gamma_{i_1}) - r_{i_1} + \sum_{j=1}^{r_{i_1}} \frac{\theta_{i_1,j}}{\pi} = 0,
	\end{equation}
	where $r_{i_1}\in\{0,1,2\}$ and $r_{i_1} \neq 1$ because $\frac{\theta_{i_1,j}}{2\pi} \notin \mathbb{Q}$ for $1 \le j \le r_{i_1}$.
	
	We now consider two cases:
	
	\textbf{Case 1:} If $r_{i_1} = 0$, then \eqref{eq:i1} implies $\mu(\gamma_{i_1}) = 0$, which contradicts the hypothesis of Theorem~\ref{thm:minor}.
	
	\textbf{Case 2:} If $r_{i_1} = 2$, then from \eqref{eq:i1} we have
	\[
	\mu(\gamma_{i_1}) = 2 - \sum_{j=1}^2 \frac{\theta_{i_1,j}}{\pi} \in (-2, 2),
	\]
	so $\mu(\gamma_{i_1}) \in \{-1, 0, 1\}$.
	Since $r_{i_1} = 2$, Proposition~\ref{index:odd-even} imply that $\mu(\gamma_{i_1}) \in 2\mathbb{Z}$. Therefore, $\mu(\gamma_{i_1}) = 0$, which again contradicts the hypothesis of Theorem~\ref{thm:minor}.
	
	In both cases, we reach a contradiction. The proof is complete.
\end{proof}

\begin{proof}[Proof of Theorem \ref{thm:minor}]
Since $\H_1(B;\mathbb Q)=0$, Poincar\'e duality gives
\[
\H_3(B;\mathbb Q)=0,
\qquad
\dim\H_4(B;\mathbb Q)=\dim\H_0(B;\mathbb Q){{}=1}.
\]
Hence $\chi(B)={2}+\dim\H_2(B;\mathbb Q)$. 
	
By Lemma~\ref{lemma:nonzeromean}, Lemma~\ref{lemma:nonzeroindex}, and all the assumptions in Theorem~\ref{thm:minor}, we see that the conditions of Theorem~\ref{thm:main} for $ n=2 $ are all satisfied. Therefore, by Theorem~\ref{thm:main}, the number of geometrically distinct contractible periodic orbits carried by $\alpha$ is at least 
$$
\frac{1}{2}\chi(B) + \frac{1}{2} \dim \H_{2}(B;\mathbb{Q})+{1}=\dim \H_{\ast}(B;\mathbb{Q}).
$$
Thus, the proof is complete.
\end{proof}

\section*{Acknowledgements}
The author is supported by the Postdoctoral Fellowship Program of CPSF under Grant Number GZC20252019.

\end{document}